\newtheorem{defi}{Definition}
\newtheorem{rem}{Remark}
\newtheorem{prop}{Proposition}
\newtheorem{lem}{Lemma}
\begin{document}

\title{On distributive join-semilattices}

\author{Rodolfo C. {E}rtola-Biraben$^1$, Francesc Esteva$^2$, and Llu\'is Godo$^2$ \vspace{0.7cm} \\ 
$^1$ CLE - State University of Campinas \\ %Rua SŽrgio Buarque de Holanda 251, 
13083-859 Campinas, S\~ao Paulo, Brazil   \vspace{0.3cm} \\
$^2$  IIIA - CSIC, 08193 Bellaterra, Spain
}

 \date{}

\maketitle

\begin{abstract}
 Motivated by Gentzen's disjunction elimination rule in his Natural Deduction calculus and 
 reading inequalities with meet in a natural way, 
 we conceive a notion of distributivity for join-semilattices. 
 We prove that it is equivalent to a notion present in the literature. 
 In the way, we prove that those notions are linearly ordered. 
 We finally consider the notion of distributivity in join-semilattices with arrow, that is, 
 the algebraic structure corresponding to the disjunction-conditional fragment of intuitionistic logic.  
\end{abstract}

\section{Introduction}

Different notions of distributivity for semilattices have been proposed in the literature 
as a generalization of the usual distributive property in lattices. 
As far as we know, notions of distributivity for semilattices have been given, in chronological order, 
by Gr\"atzer and Schmidt \cite{GS} in 1962, by Katri\v{n}\'ak \cite{K} in 1968, by Balbes \cite{B} in 1969, by Schein\cite{S} in 1972, 
by Hickman \cite{H} in 1984, and by Larmerov\'a and Rach\r{u}nek  \cite{LR} in 1988. 
Following the names of its authors, we will use the terminology GS-, K-, B-, S$_n$-, H-, and LR-distributivity, respectively. 

% , the last two being equivalent. 
%The given notions appeared in 1962, 1968, 1969, 1972, 1984, and 1988, respectively. 

In this paper, motivated by Gentzen's disjunction elimination rule in his Natural Deduction calculus, 
and reading inequalities with meet in a natural way, 
we conceive another notion of distributivity for join-semilattices, that we call ND-distributivity. 
We aim to find out whether it is equivalent to any of the notions already present in the literature.
In doing so, we also compare the different notions of distributivity for join-semilattices we have found. 
Namely, we see that the given notions imply each other in the following linear order:

%That is, we study the relationship between known notions of distributivity for semilattices. 
%In chronological order, 
%the notions are Gr\"atzer-Schmidt (see \cite{GS}), Katri\v{n}\'ak (see \cite{K}), Balbes (see \cite{B}), Schein (see \cite{S}), 
%Hickman (see \cite{H}), and Larmerov\'a-Rach\r{u}nek distributivity (see \cite{LR}), the last two being equivalent. 
%The given notions appeared in 1962, 1968, 1969, 1972, 1984, and 1988, respectively. 

\smallskip

\begin{center}
GS $\Rightarrow$ K $\Rightarrow$ (H $\Leftrightarrow$ LR $\Leftrightarrow$ ND) $\Rightarrow$ B $\Rightarrow \cdots$ S$_n 
\Rightarrow$ S$_{n-1} \Rightarrow \cdots$ S$_3 \Rightarrow$ S$_2$, 
\end{center}

\smallskip

\noindent and we also provide countermodels for the reciprocals. 

Additionally, 
we show that H-distributivity may be seen as a very natural translation of a way to define distributivity for lattices, 
fact that will provide more motivation for the use of that notion. 
Note that Hickman used the term mild distributivity for H-distributivity. 
 
%We will consider lattices and semilattices as posets. 
%As usual, the notations $a \wedge b$ and $a \vee b$ shall stand for inf $\{a, b \}$ and sup $\{ a, b\}$, respectively.  

The paper is structured as follows. 
After this introduction, in Section 2 we provide some notions and notations that will be used in the paper. 
In Section 3 we show how to arrive to our notion of ND-distributivity for join-semilattices. 
In Section 4 we compare the different notions of distributivity for join-semilattices that appear in the literature. 
We prove that one of those is equivalent to the notion of ND-distributivity found in Section 3. 
Finally, in Section 5 we consider what happens with the different notions of distributivity considered in Section 4 
when join-semilattices are  expanded with a natural version of the relative meet-complement. 

\section{Preliminaries}

In this section we provide the basic notions and notations that will be used in the paper. 

Let ${\bf{J}} = (J; \leq)$ be a poset. 
For any $S \subseteq J$, we will use the notations $S^l$ and $S^u$ to denote the set of lower and upper bounds of $S$, respectively. 
That is, 

\begin{itemize}
\item[] $S^l = \{x \in J: x \leq s$, for all $s \in S \}$ and 

$S^u = \{x \in J: s \leq x$, for all $s \in S \}$.   
\end{itemize}

\begin{lem} \label{BL}
Let ${\bf{J}} = (J; \leq)$ be a poset. 
For all $a, b, c \in J$ the following statements are equivalent: 
\begin{itemize}
\item[(i)] for all $x \in J$, if $x \leq a$ and $x \leq b$, then $x \leq c$, 
 
\item[(ii)] $\{a, b\}^l \subseteq \{c\}^l$,
 
\item[(iii)]  $c \in \{a, b\}^{lu}$.
\end{itemize}
\end{lem}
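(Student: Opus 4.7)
The plan is to show that all three statements are mere reformulations of one another, by unpacking the definitions of $(\cdot)^l$ and $(\cdot)^u$ given just before the lemma. Because the content is essentially definitional, I would not separate the proof into a cycle of three implications, but rather establish the two equivalences (i) $\Leftrightarrow$ (ii) and (ii) $\Leftrightarrow$ (iii) directly.

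For (i) $\Leftrightarrow$ (ii): by definition, $\{a,b\}^l$ is exactly the set of $x \in J$ such that $x \leq a$ and $x \leq b$, and $\{c\}^l$ is exactly $\{x \in J : x \leq c\}$. Hence the inclusion $\{a,b\}^l \subseteq \{c\}^l$ translates literally into the universally quantified implication stated in (i). No work is needed beyond noting this.

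For (ii) $\Leftrightarrow$ (iii): by definition, $\{a,b\}^{lu} = (\{a,b\}^l)^u$ is the set of upper bounds of $\{a,b\}^l$, so $c \in \{a,b\}^{lu}$ means that $x \leq c$ holds for every $x \in \{a,b\}^l$. But this is precisely the statement that every lower bound of $\{a,b\}$ lies in $\{c\}^l$, i.e., $\{a,b\}^l \subseteq \{c\}^l$. Thus (ii) and (iii) are again the same assertion in different notation.

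There is no genuine obstacle here; the only care needed is to keep the superscripts $l$, $u$, $lu$ straight and to verify that the symbolic rewriting in each direction is valid. The whole argument amounts to chasing the quantifiers in the definitions of lower and upper bounds.
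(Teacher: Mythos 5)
Your proof is correct: the paper states Lemma~\ref{BL} without proof, treating it as an immediate consequence of the definitions of $S^l$ and $S^u$, and your unpacking of the quantifiers is exactly the intended (and only) argument. The decomposition into (i) $\Leftrightarrow$ (ii) and (ii) $\Leftrightarrow$ (iii) is fine, and both equivalences are verified correctly.
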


A poset ${\bf{J}} = (J; \leq)$ is a {\em join-semilattice} (resp. meet-semilattice) 
if $\sup\{a, b\}$ (resp. $\inf\{a, b\}$) exists for every $a, b \in J$. 
A poset ${\bf{J}} = (J; \leq)$ is a lattice if it is both a join- and a meet-semilattice. 
As usual, the notations $a \vee b$ (resp.  $a \wedge b$) shall stand for  $\sup\{ a, b\}$ (resp. $\inf\{a, b \}$).  

%
%\begin{lem} \label{BL}
% Let ${\bf{M}} = (M; \leq)$ be a meet-semilattice. 
% Then, for all $a, b, c \in M$, the following are equivalent:  
% 
% (i) $a \wedge b \leq c$,
% 
% (ii) for all $x \in M$, (if $x \leq a$ and $x \leq b$, then $x \leq c$), 
% 
% (iii) $\{a, b\}^l \subseteq \{c\}^l$,
% 
% (iv) $c \in \{a, b\}^{lu}$.
%\end{lem}

%
%An ideal of a join-semilattice ${\bf{J}} = (J; \leq)$ is a non empty subset $I$ of $J$ such that \\
%(i) If $x \in I$ and $y \leq x$, then $y \in I$ and \\
%(ii) if $x,y \in I$, then $x\vee y \in I$. \\
%We will denote  by $Id({\bf{J}})$ the set of ideals of  ${\bf{J}}$. 

Given a join-semilattice ${\bf{J}} = (J; \leq)$, we will use the following notions:

\begin{itemize}

\item  ${\bf{J}}$ is \emph{downwards directed} iff 
for any $a, b \in J$, there exists $c \in J$ such that $c \leq a$ and $c \leq b$. 

\item A non empty subset $I \subseteq J$ is said to be an {\em ideal} iff \\
(1) if $x,y \in I$, then $x\vee y \in I$ and \\
(2) If $x \in I$ and $y \leq x$, then $y \in I$.
\item The principal ideal generated by an element $a \in A$, noted $(a]$, is defined
by $(a] = \{x \in A : x \leq a\}$.
\item $Id({\bf{J}})$ will denote the set of all ideals of ${\bf{J}}$.
\item  $Id_{fp}({\bf{J}})$ will denote the subset of ideals that are intersection of a finite set of principal ideals, that is, 
$Id_{fp}({\bf{J}}) = \{(a_1] \cap \dots \cap (a_k] : a_1,...a_k \in J\}$.
\end{itemize}

In this paper we are concerned with various notions of distributivity for join-semilattices, 
all of them generalizing the usual notion of distributive lattice, that is, 
a lattice ${\bf{J}} = (J; \leq)$ is distributive if the following equation holds true for any elements $a, b, c \in J$: 

\begin{itemize}
\item[{\bf{(D)}}] $a \land (b \lor c) = (a \land b) \lor (a \land c)$ (equivalently, $a \lor (b \land c) = (a \lor b) \land (a \lor c)$). 
\end{itemize}

\noindent There are several equivalent formulations of this property, 
in particular we mention the following ones that are relevant for this paper:

\begin{itemize}
\item for all $a, b, c \in J$, if $a \lor b = a \lor c$ and $a \land b = a \land c$ then $ b = c$. 

\item for any two ideals $I_1, I_2$ of $\bf J$, the ideal $I_1 \lor I_2$ generated by their union is defined by 
$I_1 \lor I_2 = \{ a \lor b : a \in I_1, b \in I_2\}.$

\item the set  $Id({\bf{J}})$ of ideals of $\bf J$ is a distributive lattice.
\end{itemize} 

\noindent For the case of semilattices, several non-equivalent generalizations of these conditions can be found in the literature, 
already mentioned in the introduction. 
However, as expected, all of them turn to be equivalent to usual distributivity in the case of lattices.

The class of distributive lattices form a variety (that is, an equational class). 
In contrast, in any sense of distributivity for join-semilattices that coincides with usual distributivity in the case of a lattice, 
the class of distributive join-semilattices is not even a quasi-variety. 
Indeed, consider the distributive lattice in Figure \ref{Fdjns}. 
Taken as a join-semilattice, the set of black-filled nodes is a sub join-semilattice, that is clearly a non-distributive lattice (a diamond). 
Thus, it is neither distributive as a join-semilattice. 
This proves that the class of distributive (in any sense that coincides with usual distributivity in the case of a lattice) 
join-semilattices is not closed by subalgebras, and hence it is not a quasi-variety.

\begin{figure} [ht]
\begin{center}

\begin{tikzpicture}

    \tikzstyle{every node}=[draw, circle, fill=white, minimum size=4pt, inner sep=0pt, label distance=0.5mm]

    % First, draw and connect some nodes

    \draw (0,0) node (0) [fill=black] {};
    \draw (-1.3,1.3) node (1) {};
    \draw (0,1.3) node (2) {};
    \draw (1.3,1.3) node (3) {};
    \draw (-1.3,2.6) node (4) [fill=black] [label=left:a] {};
    \draw (0,2.6) node (5) [fill=black] [label=right:b] {};
    \draw (1.3,2.6) node (6) [fill=black] [label=right:c] {};
    \draw (0,3.9) node (7) [fill=black] {};
    \draw (0)--(1)--(4)--(2)--(0)--(3)--(6)--(2)
    	  (1)--(5)--(3) (4)--(7)--(6)  (5)--(7);
 
\end{tikzpicture}

\end{center}
\caption{\label{Fdjns} A distributive lattice with a non-distributive sub join-semilattice. 
}
\end{figure}
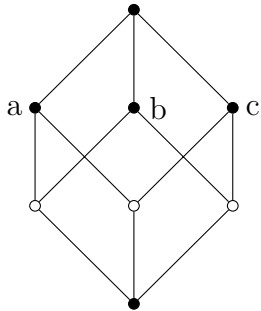

%\begin{rem} \label{Cjslnqv}
% The reader will agree that it would be weird to consider a notion of distributivity for semilattices 
% that is not equivalent to usual distributivity in the case of a lattice. 
%\end{rem}

\section{Distributivity and Natural Deduction} \label{DaND}

{%\color{blue}
Let us consider the disjunction-fragment of intuitionistic logic 
in the context of Gentzen's Natural Deduction calculus (see \cite[p. 186]{Ge}). 
It has the following introduction rule for $\vee$ and an analogous one with $\mathfrak{B}$ as only premiss: 

\begin{prooftree}
 \AxiomC{$\mathfrak{A}$}
 \LeftLabel{{\bf($\vee$I):}}
 \UnaryInfC{$\mathfrak{A \lor B}$}
\end{prooftree}

\noindent and the following disjunction elimination rule: 

\begin{prooftree}
 \AxiomC{$\mathfrak{A \lor B}$}
 \AxiomC{[$\mathfrak A$]}
 \noLine
 \UnaryInfC{$\mathfrak C$}
 \AxiomC{[$\mathfrak B$]}
 \noLine
 \UnaryInfC{$\mathfrak C$}
 \TrinaryInfC{$\mathfrak C$}
\end{prooftree}

\noindent The last rule may be read as saying that 
if $\mathfrak C$ follows from $\mathfrak A$ and $\mathfrak C$ follows from $\mathfrak B$, 
then $\mathfrak C$ follows from $\mathfrak A \vee B$, so reflecting what is usually called ``proof by cases''. 
It is possible to give an algebraic translation in the context of a join-semilattice ${\bf{J}} = (J; \leq)$: 

\smallskip

for all $a, b, c \in J$, if $a \leq c$ and $b \leq c$, then $a \vee b \leq c$,   

\smallskip

\noindent which is easily seen to be one of the conditions stating that $\vee$ is the supremum of $a$ and $b$.  
Now, the last rule is usually employed in a context with a fourth formula $\mathfrak H$: 

\begin{prooftree}
 \AxiomC{$\mathfrak H$, $\mathfrak{A \vee B}$}
 \AxiomC{$\mathfrak H$, [$\mathfrak A$]}
 \noLine
 \UnaryInfC{$\mathfrak C$}
 \AxiomC{$\mathfrak H$, [$\mathfrak B$]}
 \noLine
 \UnaryInfC{$\mathfrak C$}
 \LeftLabel{{\bf($\vee$E):}}
 \TrinaryInfC{$\mathfrak C$}
\end{prooftree}

\noindent In the context of a lattice ${\bf{L}} = (L; \leq)$, we would give the following algebraic translation: 

\smallskip

\begin{itemize}
\item[{\bf{(D$_{\wedge \vee}$)}}]  for all $h, a, b, c \in L$, 

if $h \wedge a \leq c$ and $h \wedge b \leq c$, then $h \wedge (a \vee b) \leq c$. 
\end{itemize}
\smallskip 

\noindent It is easily seen that {\bf(D$_{\wedge \vee}$)} is equivalent to the usual notion of distributivity for lattices. 
Now, the natural question arises how to give an algebraic translation of {\bf($\vee$E)} if only $\vee$ is available, 
for example, if we are in the context of a join-semilattice. 
%In order to answer that question, it is natural to consider the equivalences given in the following lemma, 
%which is easily proved. 
%We first recall the definition of two well-known notions. 

%\begin{defi}
% Let ${\bf{P}} = (P; \leq)$ be a poset and $S \subseteq P$. 
% Then, we use the notations $S^l$ and $S^u$ for the set of lower and upper bounds of $S$, respectively. 
% That is, $S^l = \{x \in P: x \leq s$, for all $s \in S \}$ and $S^u = \{x \in P: s \leq x$, for all $s \in S \}$.   
%\end{defi}
%
%\begin{lem} \label{BL}
% Let ${\bf{M}} = (M; \leq)$ be a meet-semilattice. 
% Then, for all $a, b, c \in M$, the following are equivalent:  
% 
% (i) $a \wedge b \leq c$,
% 
% (ii) for all $x \in M$, (if $x \leq a$ and $x \leq b$, then $x \leq c$), 
% 
% (iii) $\{a, b\}^l \subseteq \{c\}^l$,
% 
% (iv) $c \in \{a, b\}^{lu}$.
%\end{lem}

Considering that an inequality $u \land v \leq w$ in a lattice ${\bf{L}} = (L; \leq)$ is equivalently expressed as the first order statement  

\begin{center}
for all $x \in L$, if $x \leq u$ and $x \leq v$ then $x \leq w$,
\end{center}  
we may write  {\bf{(D$_{\wedge \vee}$)}}  in the context of a join-semilattice ${\bf{J}} = (J; \leq)$ as follows: 
% c
%instead of {\bf{(D$_{\wedge \vee}$)}} and considering the equivalence between parts (i) and (ii) in Lemma \ref{BL}, 
%we may write

\smallskip

\begin{itemize}
\item[{\bf{(D$_{\vee}$)}}] for all $h, a, b, c \in J$, 

IF for all $x \in J$ (if $x \leq h$ and $x \leq a$, then $x \leq c)$ and 

\hspace*{0.4cm} for all $x \in J$ (if $x \leq h$ and $x \leq b$, then $x \leq c)$, 

THEN for all $x \in J$ (if $x \leq h$ and $x \leq a \vee b$, then $x \leq c$). 
\end{itemize}

\smallskip

\noindent Alternatively, using the equivalence between parts (i) and (ii) in Lemma \ref{BL}, we may write 

\smallskip

\begin{itemize}
\item[{\bf{(D$_{\vee}$)}}] for all $h, a, b, c \in J$,

 if $\{h,a\}^l \cup \{h,b\}^l \subseteq \{c\}^l$, then $\{h,a \vee b\}^l \subseteq \{c\}^l$. 
\end{itemize}

\smallskip

\noindent Yet, using the equivalence between parts (ii) and (iii) in Lemma \ref{BL}, we may also alternatively write 

\smallskip

\begin{itemize}
\item[{\bf{(D$_{\vee}$)}}] for all $h, a, b, c \in J$,

if $ c \in \{h,a\}^{lu} \cup \{h,b\}^{lu}$, then $c \in \{h,a \vee b\}^{lu}$. 
\end{itemize}

\smallskip

\noindent Accordingly, given the above logical motivation, it is natural to consider 
%{\bf{(D$_{\vee}$)}} 
%(or any of its just given equivalent formulations) 
%as a 
the following notion of distributivity for join-semilattices.

\begin{defi} 
A join-semilattice ${\bf{J}} = (J; \leq)$ is called ND-distributive (ND for Natural Deduction) if it satisfies  {\bf{(D$_{\vee}$)}}. 
\end{defi}

Now, it happens that there are many different (and non-equivalent) notions of distributivity for semilattices. 
This is not new:

\smallskip

\begin{quote}
``The concept of distributivity permits different non-equivalent generalizations from lattices to semilattices.'' 
(see \cite{S})
\end{quote}
\smallskip

\noindent So, it is natural to inquire whether the given notion of ND-distributivity for join-semilattices 
is equivalent to any of the notions already present in the literature and, if so, to which.
In what follows we will solve that question. 
In doing so, we will also compare the different notions of distributivity for join-semilattices that we have found.

In this paper, given our logical motivation, we restrict ourselves to study the distributivity property in join-semilattices, 
but an analogous path could be followed for meet-semilattices or even for posets.

\begin{rem} \em
Let us note that the following rule (reflecting proof by three cases) is equivalent to {\bf($\vee$E)}:

\begin{prooftree}
 \AxiomC{$\mathfrak H$, $\mathfrak{A \vee B \vee C}$}
 
 \AxiomC{$\mathfrak H$, [$\mathfrak A$]}
 \noLine
 \UnaryInfC{$\mathfrak D$}
 
 \AxiomC{$\mathfrak H$, [$\mathfrak B$]}
 \noLine
 \UnaryInfC{$\mathfrak D$}
 
 \AxiomC{$\mathfrak H$, [$\mathfrak C$]}
 \noLine
 \UnaryInfC{$\mathfrak D$}
 
 \QuaternaryInfC{$\mathfrak D$}
\end{prooftree}

\noindent Indeed, it implies {\bf($\vee$E)} taking $\mathfrak C = \mathfrak B$. 
Also, the following derivation shows that it may be derived using {\bf($\vee$E)} twice: 

\begin{prooftree}
 \AxiomC{$\mathfrak H$, $\mathfrak{A \vee B \vee C}$}
 
 \AxiomC{$\mathfrak H$, [$\mathfrak A$]}
 \noLine
 \UnaryInfC{$\mathfrak D$}
 
 \AxiomC{$\mathfrak H$, $\mathfrak{B \vee C}$}
 \AxiomC{$\mathfrak H$, [$\mathfrak B$]}
 \noLine
 \UnaryInfC{$\mathfrak D$}
 \AxiomC{$\mathfrak H$, [$\mathfrak C$]}
 \noLine
 \UnaryInfC{$\mathfrak D$}
 \TrinaryInfC{$\mathfrak D$}
 
 \TrinaryInfC{$\mathfrak D$}
\end{prooftree}
\end{rem}

%\begin{rem} \em
%Let us consider the following rule: 
%
%\begin{prooftree}
% \AxiomC{$\mathfrak H_1$, $\mathfrak H_2$, $\mathfrak{A \vee B}$}
% \AxiomC{$\mathfrak H_1$, $\mathfrak H_2$, [$\mathfrak A$]}
% \noLine
% \UnaryInfC{$\mathfrak C$}
% \AxiomC{$\mathfrak H_1$, $\mathfrak H_2$, [$\mathfrak B$]}
% \noLine
% \UnaryInfC{$\mathfrak C$}
% \TrinaryInfC{$\mathfrak C$}
%\end{prooftree}
%
%\noindent The given rule also has a natural algebraic translation in the case of join-semilattices. 
%The question arises whether it is equivalent to {\bf{(D$_{\vee}$)}}. 
%We have not been able to solve this problem. 
%\end{rem}
}

\section{Different notions of distributivity for join-semilattices} \label{SDN}

In the following subsections we consider and compare the notions of distributivity for semilattices 
we have found in the literature. 
Some authors have presented their notion for the case of meet-semilattices and others for join-semilattices. 
We will make things uniform and, motivated by the logical considerations in the previous section, 
we will choose to consider join-semilattices.  

We emphasize that all the distributivity notions for semilattices (and posets) proposed in the literature 
are generalizations of the distributivity property for lattices, in fact, when restricted to lattices all these notions coincide.

\subsection{GS-distributivity}

The following seems to be the most popular definition of distributivity for join-semilattices. 

\begin{defi} \label{GSd}
A join-semilattice ${\bf{J}} = (J; \leq)$ is GS-distributive iff 

\begin{itemize}
\item[{\bf (GS)}]  for all $a, b, x \in J$, if $x \leq a \vee b$, then there exist $a', b' \in J$ such that 
$a' \leq a$, $b' \leq b$, and $x = a' \vee b'$. 
\end{itemize}
\end{defi}

\smallskip

\noindent In order to visualize it, see Figure \ref{Dmsl}. 
The given definition seems to have appeared for the first time in \cite[p. 180, footnote 4]{GS}. 
It also appears in many other places, e.g., in \cite[Sect. II.5.1, pp. 167-168]{Gr}. 
% As motivation we have only found the fact proved in Proposition \ref{E}. 

\begin{figure} [ht]
\begin{center}

\begin{tikzpicture}

    \tikzstyle{every node}=[draw, circle, fill=white, minimum size=4pt, inner sep=0pt, label distance=1mm]

    % First, draw and connect some nodes

    \draw (0,0)              node (x)  [label=right: $x$]          {}
        -- ++(90:1.5 cm)     node (ab) [label=above: $a \vee b$] {}
        -- ++(225:1.4142 cm) node (a)  [label=left: $a$]           {}
        -- ++(45:1.4142 cm) 
        -- ++(315:1.4142 cm) node (b)  [label=right: $b$]          {};
        
        \draw [dotted, very thick] (b)--(1,-1) node () [label=right: $b'$] {}
        -- ++ (x) 
        -- ++ (-1,-1) node () [label=left: $a'$] {}
        -- ++ (a);
 
\end{tikzpicture}

\end{center}
\caption{\label{Dmsl} Diagram for the usual notion of distributivity for join-semilattices}
\end{figure}
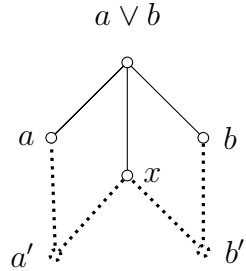

Next, note that {\bf(GS)} implies that every pair elements has a lower bound. 
In fact, we have the following equivalence. 

\begin{prop} \label{E1}
 Let ${\bf{J}} = (J; \leq)$ be a join-semilattice. 
 Then, the following two statements are equivalent: 
 
 (i) Every pair of elements has a lower bound. 
 
 (ii) for all $a, b, x \in J$, if $x \leq a \vee b$, 
 then there exist $a', b' \in J$ such that $a' \leq a$, $b' \leq b$, and $a' \vee b' \leq x$. 
\end{prop}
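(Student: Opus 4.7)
The plan is to prove the equivalence by two independent implications, both of them short and elementary.

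For $(i) \Rightarrow (ii)$, I would start with arbitrary $a, b, x \in J$ satisfying $x \leq a \vee b$ (this hypothesis will actually not be used) and apply (i) twice: once to the pair $\{a, x\}$ to obtain some $a' \in J$ with $a' \leq a$ and $a' \leq x$, and once to the pair $\{b, x\}$ to obtain $b' \in J$ with $b' \leq b$ and $b' \leq x$. Since $x$ is then an upper bound of $\{a', b'\}$, the definition of join immediately gives $a' \vee b' \leq x$, which is exactly the desired conclusion.

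For $(ii) \Rightarrow (i)$, given arbitrary $a, b \in J$, the trick is to specialize (ii) to the instance $x := a$. Then $x \leq a \vee b$ is automatic, so (ii) supplies $a', b' \in J$ with $a' \leq a$, $b' \leq b$ and $a' \vee b' \leq a$. Since $b' \leq a' \vee b' \leq a$ and $b' \leq b$, the element $b'$ is a common lower bound of $a$ and $b$, establishing downward directedness.

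The only step that requires a small idea rather than a direct unwinding of definitions is the specialization $x := a$ in the converse direction; everything else is immediate from the definition of join and the transitivity of $\leq$. So I do not expect any serious obstacle, and no additional hypothesis beyond what is stated (in particular, the $x \leq a \vee b$ clause in (ii) plays no role in $(i) \Rightarrow (ii)$, a minor curiosity worth noting in passing).
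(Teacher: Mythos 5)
Your proof is correct and follows essentially the same route as the paper's: apply (i) to the pairs $\{a,x\}$ and $\{b,x\}$ for the forward direction, and specialize (ii) to $x:=a$ (using $a \leq a \vee b$) to extract the lower bound $b'$ for the converse. The side remark that $x \leq a \vee b$ is not needed for $(i) \Rightarrow (ii)$ is accurate.
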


\begin{proof}
 (i) $\Rightarrow$ (ii) 
 Suppose $x \leq a \vee b$. 
 Let $a'$ be a lower bound of $\{ a, x\}$ and $b'$ be a lower bound of $\{ b, x\}$. 
 Then, $a' \leq a$ and $b' \leq b$. 
 Also, $a' \leq x$ and $b' \leq x$, which implies that $a' \vee b' \leq x$. 
 
 (ii) $\Rightarrow$ (i) 
 Let $a, b \in J$. 
 We have $a \leq a \vee b$. 
 Then, by hypothesis, there exist $a' \leq a$, $b' \leq b$ such that $a' \vee b' \leq a$. 
 As $b' \leq a' \vee b'$, it follows that $b' \leq a$. 
 Then, $b' \leq a, b$. 
 That is, $b'$ is a lower bound of $\{ a, b\}$. 
\end{proof}

This proposition shows that every GS-distributive join-semilattice is downward directed. 
This implies, as it is shown in \cite{Gr}, that the ideal $I \lor J$, generated by the union of two ideals $I, J$, 
is defined as in the case of distributive lattices, namely,
$$I \lor J = \{ a \lor b : a \in I, b \in J\}.$$

{%\color{blue}
\noindent As a consequence, 
it follows that the ideals of a (GS)-distributive join-semilattice ${\bf{J}}$ form a lattice that will be denoted by $Id({\bf{J}})$, 
and Gr\"atzer proves in  \cite[p. 168]{Gr} the following characterization result. 
%We also have the following fact, a proof of which may be found in

\begin{prop} \label{dJedi}
 Let ${\bf{J}}$ be a join-semilattice. 
 Then, ${\bf{J}}$ is (GS)-distributive iff $Id({\bf{J}})$ is distributive. 
\end{prop}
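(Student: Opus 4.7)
The plan is to prove each direction using two basic facts about ideal joins in a join-semilattice ${\bf{J}}$: (i) $(a]\vee(b]=(a\vee b]$ always, since $(a\vee b]$ is plainly the least ideal containing both $a$ and $b$; and (ii) in the GS-distributive case, the explicit formula $I_1\vee I_2=\{u\vee v:u\in I_1,\,v\in I_2\}$ recalled just before the proposition.

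For the direction ($\Rightarrow$), GS-distributivity implies downward directedness by Proposition \ref{E1}, so intersections of ideals are automatically non-empty ideals and $Id({\bf{J}})$ is a lattice with $\cap$ as meet. The only non-trivial direction of distributivity to establish is
\[I_1\cap(I_2\vee I_3)\subseteq(I_1\cap I_2)\vee(I_1\cap I_3).\]
For $y$ in the left-hand side, fact (ii) lets me write $y=u\vee v$ with $u\in I_2$ and $v\in I_3$; applying GS to $y\leq u\vee v$ produces $u'\leq u$ and $v'\leq v$ with $y=u'\vee v'$. Since $u'\leq y\in I_1$ and $u'\leq u\in I_2$, I have $u'\in I_1\cap I_2$, and symmetrically $v'\in I_1\cap I_3$, placing $y$ in the right-hand side.

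For the direction ($\Leftarrow$), assume $Id({\bf{J}})$ is a distributive lattice. Because each $(a]\cap(b]$ must itself be a non-empty ideal, ${\bf{J}}$ is downward directed, and therefore $(x]\cap(a]$ is just the ideal of common lower bounds of $\{x,a\}$. Now, given $x\leq a\vee b$, I would compute in $Id({\bf{J}})$ using (i) and distributivity:
\[(x]=(x]\cap(a\vee b]=(x]\cap((a]\vee(b])=\bigl((x]\cap(a]\bigr)\vee\bigl((x]\cap(b]\bigr).\]
Membership of $x$ in this last join means $x\leq s_1\vee\cdots\vee s_n$ for finitely many $s_i$ each lying in one of the two summands; grouping the $s_i$ according to their summand, and using that each summand is closed under finite joins (together with downward directedness to supply an element for an empty group when needed), I obtain $a'\in(x]\cap(a]$ and $b'\in(x]\cap(b]$ with $x\leq a'\vee b'$. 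Since $a',b'\leq x$ forces $a'\vee b'\leq x$ as well, I conclude $x=a'\vee b'$ with $a'\leq a$ and $b'\leq b$, which is precisely GS.

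The one delicacy, more than a genuine obstacle, is the $(\Leftarrow)$ direction: one must first derive downward directedness from the mere fact that $Id({\bf{J}})$ is a \emph{lattice} (so that intersections of ideals are well defined as ideals and the distributive equation is meaningful), and then remember that membership of $x$ in a join of ideals gives only an inequality $x\leq a'\vee b'$; the equality $x=a'\vee b'$ demanded by GS emerges only because $a'$ and $b'$ both lie in $(x]$. The remainder is essentially bookkeeping with formulas (i) and (ii).
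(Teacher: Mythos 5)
Your proof is correct, but note that the paper itself gives no argument for this proposition: it simply attributes the result to Gr\"atzer's book, so there is no internal proof to compare against. Your self-contained argument is essentially the standard one and it holds up. In the forward direction you correctly reduce everything to the explicit join formula $I_1\vee I_2=\{u\vee v: u\in I_1,\, v\in I_2\}$, whose validity (specifically, downward closure of that set) is the real place where {\bf (GS)} is used; observe, though, that your subsequent application of {\bf (GS)} to $y\leq u\vee v$ is redundant, since once $y=u\vee v$ with $y\in I_1$, downward closure of $I_1$ already puts $u$ and $v$ in $I_1$, so $u\in I_1\cap I_2$ and $v\in I_1\cap I_3$ directly. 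In the converse direction you correctly isolate the two delicate points: that the mere existence of meets in $Id({\bf{J}})$ forces $(a]\cap(b]\neq\emptyset$ and hence downward directedness (since ideals are non-empty by definition, an empty intersection would leave $(a]$ and $(b]$ with no lower bound in $Id({\bf{J}})$), and that membership of $x$ in a join of ideals only yields $x\leq a'\vee b'$, the equality being recovered because $a',b'\in(x]$. Both points are handled properly, including the case where one of the two groups of $s_i$ is empty. The argument is a useful addition precisely because the paper omits it.
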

}

%\green{*** to be moved (without proof) to the Preliminaries***  It is (i) of Lemma 184 in  \cite{Gr}.  
%

%{\color{red}
%
%In the end of this section we will see that, in the context of a lattice, 
%(GS)-distributivity and all the notions of distributivity to be considered 
%are equivalent to the usual notion of distributive lattice. 
%With that in mind, we state and prove the following fact.   
%
%\begin{prop} \label{DIGS}
% Let ${\bf L} = (L; \leq)$ be a lattice. 
% Then, {\bf(GS)} is implied by 
%
% \medskip
% 
% {\bf{(D)}} $d \wedge (a \vee b) \leq (d \wedge a) \vee (d \wedge b)$, for all $a, b, d \in L$. 
%\end{prop}
%
%\begin{proof}
% Suppose (S) $x \leq a \vee b$. 
% Let $a' = x \wedge a$ and $b' = x \wedge b$. 
% It is clear that $a' \leq a$ and $b' \leq b$. 
% Now, let us also see that $x = a' \vee b'$, that is, that $x = (x \wedge a) \vee (x \wedge b)$. 
% With no need of {\bf{(D)}}, we immediately have that $(x \wedge a) \vee (x \wedge b) \leq x$. 
% In order to get the reciprocal inequality, 
% let us use {\bf{(D)}} to get $x \wedge (a \vee b) \leq (x \wedge a) \vee (x \wedge b)$. 
% Now, from (S) it follows that $x \leq x \wedge (a \vee b)$. 
% Then, using $\leq$-transitivity, it follows that $x \leq (x \wedge a) \vee (x \wedge b)$.
%\end{proof}
%}

\subsection{K-distributivity}

The concept given in the following definition is similar to the one in {\bf(GS)}.

\begin{defi} \label{Kd}
A join-semilattice ${\bf{J}} = (J; \leq)$ is K-distributive iff 

\begin{itemize}
\item[{\bf (K)}] for all $a, b, x \in J$, if $x \leq a \vee b$, $x \nleq a$ and $x \nleq b$, 
then there exist $a', b' \in J$ such that $a' \leq a$, $b' \leq b$, and $x = a' \vee b'$. 
\end{itemize}
\end{defi}

\noindent In order to visualize, see again Figure \ref{Dmsl}. 
The given definition seems to have appeared for the first time in \cite[Definition 4, p. 122]{K}. 
It also appears, for example, in \cite[p. 167]{H}. 

It turns out that, from the very definition, GS-distributivity implies K-distributivity. 
In fact, as noted in \cite[1.5, p. 122-123]{K}, 
it is the case that GS-distributivity is equivalent to K-distributivity plus the condition that 
every pair of elements has a lower bound (that is, downward directed). 
Therefore, the following proposition makes clear the relationship between GS- and and K-distributivity. 

\begin{prop} GS-distributivity implies K-distributivity, but not conversely. 
\end{prop}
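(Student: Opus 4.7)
The forward implication is essentially immediate from inspection of the definitions. The plan is to note that condition \textbf{(K)} is just \textbf{(GS)} weakened by adding the extra hypothesis ``$x \nleq a$ and $x \nleq b$'' to the antecedent. Thus any join-semilattice satisfying \textbf{(GS)} automatically satisfies \textbf{(K)}: given $a, b, x$ with $x \leq a \vee b$, $x \nleq a$ and $x \nleq b$, apply \textbf{(GS)} directly (ignoring the two extra assumptions) to obtain the required $a' \leq a$ and $b' \leq b$ with $x = a' \vee b'$.

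For the non-converse, the plan is to exhibit a small K-distributive join-semilattice that fails to be GS-distributive. The key observation is Proposition~\ref{E1}: any GS-distributive semilattice is downward directed. Hence any K-distributive semilattice that is \emph{not} downward directed will do. The simplest such example is the three-element join-semilattice $J = \{a, b, a \vee b\}$, where $a$ and $b$ are incomparable and their supremum $a \vee b$ is the top element; here $\{a, b\}$ has no lower bound, so $\bf J$ is not downward directed and therefore not GS-distributive.

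It then remains to verify that this $\bf J$ satisfies \textbf{(K)}. The only non-trivial join in $\bf J$ is $a \vee b$, so the only instance to check is $x \leq a \vee b$ with $x \nleq a$ and $x \nleq b$. Among the three elements of $J$, only $x = a \vee b$ satisfies these constraints, and in that case one may take $a' = a$ and $b' = b$, which clearly witness \textbf{(K)}. Thus $\bf J$ is K-distributive but not GS-distributive.

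I do not expect any genuine obstacle: the forward direction is a one-line syntactic observation, and the counterexample is the minimal non-downward-directed join-semilattice, whose K-distributivity is verified by a finite case analysis. The only subtlety is making sure the counterexample really fails \textbf{(GS)}, which is handled cleanly by invoking Proposition~\ref{E1}.
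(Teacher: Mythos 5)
Your proposal is correct and follows essentially the same route as the paper: the forward direction is read off from the definitions, and the non-converse is witnessed by the three-element join-semilattice with incomparable $a,b$ and top $a\vee b$ (the paper's Figure \ref{KGS}), whose failure of {\bf(GS)} the paper also traces to the lack of downward directedness. The only cosmetic difference is that the paper additionally spells out the direct failure of {\bf(GS)} at the instance $a\le a\vee b$, while you invoke Proposition \ref{E1}; both are fine.
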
 

%\begin{proof}
%That  GS-distributivity implies K-distributivity is immediate from the definition. 
The most simple counter-example showing that the reciprocal does not hold is the join-semilattice in Figure \ref{KGS}, that is not downward directed. 
Indeed, the given join-semilattice is K-distributive, 
as the only way to satisfy the antecedent of {\bf(K)} is to take $1 \leq a \vee b$, 
but then the consequent is also true.  
On the other hand, it is not (GS)-distributive, as we have $a \leq a \vee b$ and,  
however, there are no $a' \leq a$, $b' \leq b$ such that $a' \vee b' = a$.   
%\end{proof}

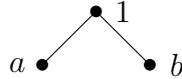
\begin{figure} [ht]
\begin{center}

\begin{tikzpicture}

    \tikzstyle{every node}=[draw, circle, fill=black, minimum size=4pt, inner sep=0pt, label distance=1mm]

    \draw (0,0)		node (a)	[label=left: $a$]	{}
        -- ++(45:1cm)	node (1)	[label=right: $1$]	{}
        -- ++(315:1cm)	node (b)	[label=right: $b$]	{};
 
\end{tikzpicture}

\end{center}
\caption{\label{KGS} Join-semilattice showing that K- does not imply GS-distributivity}
\end{figure}

 %Note that the join-semilattice of Figure \ref{KGS} is not downward directed. 

{%\color{blue}
Finally, analogously to Proposition \ref{dJedi}, we have the following characterisation of K-distributivity via ideals,  
%if also the empty set is allowed to be an ideal 
a proof of which may be found in \cite[p. 123]{K}. 

 \begin{prop} \label{Kiff}
 Let ${\bf{J}}$ be a join-semilattice. 
 Then, ${\bf{J}}$ is (K)-distributive iff $Id({\bf{J}}) \cup \{\emptyset\}$ is distributive. 
\end{prop}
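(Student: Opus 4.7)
The plan is to prove both implications in parallel by exploiting the explicit description of joins in $Id({\bf J})\cup\{\emptyset\}$. First I would observe that $Id({\bf J})\cup\{\emptyset\}$ is a lattice under inclusion: meets are intersections (with the empty intersection absorbed by the adjoined $\emptyset$), and the join of two nonempty ideals is the smallest ideal containing their union. The reason for adjoining $\emptyset$ at all is precisely that a K-distributive join-semilattice need not be downward directed, so two principal ideals may intersect emptily.

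For the forward direction, assume (K). I would first prove the explicit formula
$$I \vee J \;=\; I \cup J \cup \{a \vee b : a \in I,\, b \in J\}$$
for any two nonempty ideals $I,J$. The only non-obvious step is downward closure of the right-hand side: if $y \leq a \vee b$ with $a \in I$, $b \in J$ and $y \not\leq a$, $y \not\leq b$, then (K) yields $y = a' \vee b'$ with $a' \leq a \in I$ and $b' \leq b \in J$, so $y$ sits in the displayed set. Using this formula, the inclusion
$$I_1 \cap (I_2 \vee I_3) \;\subseteq\; (I_1 \cap I_2) \vee (I_1 \cap I_3)$$
is immediate: any $x$ in the left-hand side is either in $I_2$, in $I_3$, or equals $a \vee b$ with $a \in I_2$, $b \in I_3$; in the last case, $a, b \leq x \in I_1$ forces $a \in I_1 \cap I_2$ and $b \in I_1 \cap I_3$, so neither intersection is empty and $x = a\vee b$ lies in their join. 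The reverse inclusion holds in any lattice.

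For the converse, assume $Id({\bf J})\cup\{\emptyset\}$ is distributive and take $x \leq a \vee b$ with $x \not\leq a$, $x \not\leq b$. I would first observe $(a] \vee (b] = (a \vee b]$ (one containment is because $(a\vee b]$ is an ideal containing $a,b$; the other because any ideal closed under $\vee$ and containing $a,b$ contains $a\vee b$). Distributivity applied inside $(x]$ gives
$$(x] \;=\; (x] \cap (a \vee b] \;=\; ((x] \cap (a]) \vee ((x] \cap (b]).$$
Since $x \not\leq a$ implies $(x] \not\subseteq (a]$, and analogously for $b$, neither intersection can be empty (else the right-hand side would be contained in $(a]$ or in $(b]$). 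Now $x$ lies in the right-hand side, but $x \notin (x]\cap (a]$ and $x\notin (x]\cap(b]$; by the explicit join formula proved in the forward direction (applied within the K-less setting, using only nonemptiness of the two intersections), $x$ must be of the form $a' \vee b'$ with $a' \in (x]\cap(a]$ and $b' \in (x]\cap(b]$, which is exactly what (K) demands.

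The main obstacle is the careful bookkeeping around possibly empty intersections: one must argue, inside the distributivity identity, that the two intersections are forced to be nonempty precisely by the hypotheses $x \not\leq a$ and $x \not\leq b$ of (K), and must verify in the forward direction that the explicit join formula handles the empty-intersection cases (where the right-hand side collapses to a single intersection or to $\emptyset$) without extra work. Apart from this, the argument is a routine translation between element-level and ideal-level statements, parallel to Proposition~\ref{dJedi} but one step more delicate because of the adjoined $\emptyset$.
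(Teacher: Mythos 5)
The paper gives no proof of this proposition; it simply cites Katri\v{n}\'ak \cite[p.~123]{K}, so there is nothing in the text to compare against and I am judging your argument on its own merits. Your forward direction is sound: the identity $I \vee J = I \cup J \cup \{a \vee b : a \in I, b \in J\}$ is exactly where {\bf (K)} enters (in the downward-closure check), and the case analysis for $I_1 \cap (I_2 \vee I_3) \subseteq (I_1 \cap I_2) \vee (I_1 \cap I_3)$, including the empty-intersection cases, goes through as you describe.

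The converse, however, has a genuine gap at the final step. You extract $x = a' \vee b'$ from $x \in ((x]\cap(a]) \vee ((x]\cap(b])$ by invoking ``the explicit join formula \ldots applied within the K-less setting, using only nonemptiness of the two intersections.'' But nonemptiness is not what makes that formula true: its proof needs {\bf (K)} precisely to show that $I \cup J \cup \{a\vee b : a \in I, b \in J\}$ is downward closed, and {\bf (K)} is the very thing you are trying to establish, so as written the step is circular. The repair is short but requires a different observation: for nonempty ideals $I, J$, the \emph{downward closure} of $T = \{a' \vee b' : a' \in I, b' \in J\}$ is always an ideal containing $I \cup J$ (it contains $I$ and $J$ because $a' \leq a' \vee b_0$ for any fixed $b_0 \in J$, it is join-closed because $T$ is, and it is downward closed by construction), hence $I \vee J \subseteq {\downarrow}T$ with no distributivity hypothesis on ${\bf J}$. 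Applying this with $I = (x]\cap(a]$ and $J = (x]\cap(b]$ gives $x \leq a' \vee b'$ for some $a' \leq a,\, a' \leq x$ and $b' \leq b,\, b' \leq x$; since also $a' \vee b' \leq x$, equality $x = a' \vee b'$ follows and {\bf (K)} is verified. With that substitution your proof is correct.
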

}

\subsection{H-distributivity} 

In \cite{H} Hickman introduces the concept of {\em mildly distributive} meet-semilattices 
as those meet-semilattices whose lattice of their strong ideals is distributive. 
In \cite[Theorem 2.5, p. 290]{H} it is stated that it is equivalent to the following statement:
\footnote{Note that the original Hickman's statement can be misleading since the condition 
``there exists $(x \wedge a_1) \vee (x \wedge a_2) \vee \cdots \vee (x \wedge a_n)$'' is missing.}

\smallskip

\begin{itemize}
\item[{\bf (H)}] for all $n$ and $x, a_1, \cdots, a_n$, 

IF for all $b$ (if $a_1 \leq b, \cdots, a_n \leq b$, then $x \leq b)$, 

THEN there exists $(x \wedge a_1) \vee \cdots \vee (x \wedge a_n)$ and $x \leq (x \wedge a_1) \vee \cdots \vee (x \wedge a_n)$.  
\end{itemize}

\smallskip

{%\color{blue}
\noindent The given conditional may be seen as a translation of the following version of distributivity for lattices: 

\smallskip

\begin{center}
IF $x \leq a_1 \vee \cdots \vee a_n$, 
THEN $x \leq (x \wedge a_1) \vee \cdots \vee (x \wedge a_n)$. 
\end{center}
\smallskip

\noindent In the case of a join-semilattice ${\bf{J}} = (J; \leq)$ and using quantifiers, 
{\bf (H)} may be rendered as follows: 

\smallskip

\begin{itemize}
\item[] for all $n$ and $x, a_1, \dots, a_n \in J$,

IF $x \leq a_1 \vee \cdots \vee a_n$,  

THEN for all $y$, 
if 
for all $i=1, \ldots, n$ (for all $z$, IF $z \leq x$ and $z \leq a_i$, THEN $z \leq y$) 
%and 
%
%for all $z$, IF $z \leq x$ and $z \leq b$, THEN $z \leq y$), 

\hspace*{2.9cm} then $x \leq y$
\end{itemize}
\smallskip

\noindent that is in turn equivalent to: 

\smallskip

\begin{itemize}
\item[] for all $n$ and $x, a_1, \dots, a_n \in J$,

IF $x \leq a_1 \vee \cdots \vee a_n$,  

THEN for all $y$, 
if 
(for all $z$, IF $z \leq x$ and ($z \leq a_1$ or \ldots or $z \leq a_n$), THEN $z \leq y$) 
%and 
%
%for all $z$, IF $z \leq x$ and $z \leq b$, THEN $z \leq y$), 

\hspace*{2.9cm} then $x \leq y$. 
\end{itemize}

\smallskip

\noindent Using set-theoretic notation, {\bf (H)} may also be rendered as follows: 

\smallskip

\begin{itemize}
\item[{\bf (C)}] for all $n$ and $x, a_1, \dots, a_n \in J$,

if $x \leq a_1 \vee \cdots \vee a_n$, then $x \in (\{x, a_1 \}^l  \cup \cdots \cup \{x, a_n \}^l)^{ul}$. 
\end{itemize}

\smallskip

\noindent At this point, 
the reader may wonder whether the number $n$ of arguments is relevant or whether two arguments are enough. 
Let us settle this question. 
Firstly, with that in mind, consider  

\smallskip

\begin{itemize}
\item[{\bf (D$_{\vee_n}$)}] for all $x, a_1, \dots , a_n, c$, 

if $\{x,a_1\}^l  \cup \cdots \cup \{x,a_n\}^l \subseteq \{c \}^l$, 
then $\{x,a_1 \vee \cdots \vee a_n \}^l \subseteq \{c\}^l$. 
\end{itemize}

\noindent Now, let us state the following fact. 

\begin{lem}
 {\bf (D$_{\vee_n}$)} is equivalent to {\bf (C)}. 
\end{lem}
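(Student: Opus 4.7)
The plan is to exploit the Galois connection between lower and upper bounds: for any $T \subseteq J$ and any $c \in J$, one has $T \subseteq \{c\}^l$ iff $c \in T^u$, which is just the definition of ``upper bound''. Combined with the trivial observation that $x \leq a_1 \vee \cdots \vee a_n$ iff $x \in \{x, a_1 \vee \cdots \vee a_n\}^l$ (since always $x \leq x$), this provides a dictionary between the two formulations.

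For the direction \textbf{(D$_{\vee_n}$)} $\Rightarrow$ \textbf{(C)}, I would assume $x \leq a_1 \vee \cdots \vee a_n$ and pick any upper bound $c$ of $S := \{x,a_1\}^l \cup \cdots \cup \{x,a_n\}^l$. Rewriting $c \in S^u$ as $S \subseteq \{c\}^l$, the hypothesis \textbf{(D$_{\vee_n}$)} immediately yields $\{x, a_1 \vee \cdots \vee a_n\}^l \subseteq \{c\}^l$; since $x$ itself lies in this latter set, I conclude $x \leq c$, whence $x \in S^{ul}$.

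For the direction \textbf{(C)} $\Rightarrow$ \textbf{(D$_{\vee_n}$)}, I would assume $\{x,a_1\}^l \cup \cdots \cup \{x,a_n\}^l \subseteq \{c\}^l$ and take an arbitrary $y \in \{x, a_1 \vee \cdots \vee a_n\}^l$, so $y \leq x$ and $y \leq a_1 \vee \cdots \vee a_n$. The natural move is to apply \textbf{(C)} \emph{not} at $x$ but at $y$, obtaining $y \in (\{y,a_1\}^l \cup \cdots \cup \{y,a_n\}^l)^{ul}$. The key monotonicity observation is that $y \leq x$ forces $\{y,a_i\}^l \subseteq \{x,a_i\}^l$ for each $i$, so $\{y,a_1\}^l \cup \cdots \cup \{y,a_n\}^l \subseteq \{c\}^l$; this says $c$ is an upper bound of that union, and hence $y \leq c$, as required.

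No real obstacle arises; the only easily-missed point is the need in the second direction to apply \textbf{(C)} at an arbitrary $y$ below $x$ rather than at $x$ itself, and then to transport the resulting inclusion upward via the monotonicity of $\{-,a_i\}^l$ in its first argument. The whole argument is essentially an unpacking of the $\ ^l\!/\ ^u$ Galois correspondence together with Lemma~\ref{BL}.
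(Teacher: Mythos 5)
Your proof is correct and follows essentially the same route as the paper's: the forward direction instantiates $c$ as an arbitrary upper bound and uses $x \in \{x, a_1 \vee \cdots \vee a_n\}^l$, while the backward direction applies \textbf{(C)} at an arbitrary $y \leq x$ and transports the conclusion via $\{y,a_i\}^l \subseteq \{x,a_i\}^l$ (a monotonicity step the paper leaves implicit but which you rightly make explicit). No issues.
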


\begin{proof}
 $\Rightarrow$) Suppose $x \leq a_1 \vee \cdots \vee a_n$  
 and $y \in (\{x,a_1\}^l \cup \cdots \cup \{x,a_n\}^l)^u$. 
 Our goal is to see that $x \leq y$. 
 Take $c=y$ and apply  {\bf (D$_{\vee_n}$)}. 
 Then we have $\{x\}^l = \{x,a_1 \vee \cdots \vee a_n \}^l \subseteq \{y\}^l$, and hence $x \leq y$. 
% 
% As we have (H) and $x \leq x$, using {\bf (D$_{\vee_n}$)}, 
% our goal follows from $\{x,a_1\}^l \cup \{x,a_2\}^l \cup \cdots \cup \{x,a_n\}^l \subseteq \{y \}^l$. 
% Take, for example, $z \in \{x,a_1\}^l$. 
% Then, using (S), it follows that $z \leq y$, that is, $z \in \{ y\}^l$. 
% 

 $\Leftarrow$) Suppose $\{x,a_1\}^l \cup \{x,a_2\}^l \cup \cdots \cup \{x,a_n\}^l \subseteq \{c \}^l$. 
 We have to prove that, if $y \leq x$ and $y \leq a_1 \vee \cdots \vee a_n$ then $y \leq c$. 
 Now, using {\bf (C)}, and the assumptions  $y \leq x$ and $y \leq a_1 \vee \cdots \vee a_n$ it follows that  
$y \in (\{x,a_1\}^l \cup  \cdots \cup \{x,a_n\}^l)^{ul}$. But  since $\{x,a_1\}^l \cup \{x,a_2\}^l \cup \cdots \cup \{x,a_n\}^l \subseteq \{c \}^l$, we also have 
$y \in (\{x,a_1\}^l \cup  \cdots \cup \{x,a_n\}^l)^{ul} \subseteq \{c\}^{lul} = \{c\}^l$. Hence $y \leq c$. 
%, which is equivalent to 
% for all $z$, if $z \in (\{x,a_1\}^l  \cup \cdots \cup \{x,a_n\}^l)^u$, then $y \leq z$. 
% 
% 
% Given our goal, it is enough to prove that $c \in (\{x,a_1\}^l \cup \{x,a_2\}^l \cup \cdots \cup \{x,a_n\}^l)^u$. 
% Accordingly, let us suppose that $w$ is such that (S) $w \in \{x,a_1\}^l \cup \{x,a_2\}^l \cup \cdots \cup \{x,a_n\}^l$. 
% Our new goal is $w \leq c$. 
% Now, using (i) and $\leq$-transitivity, (S) implies that $w \in \{x,a_1\}^l \cup \{x,a_2\}^l \cup \cdots \cup \{x,a_n\}^l$. 
% So, by (H), $w \in \{ c \}^l$, that is, $w \leq c$. 
\end{proof}

%\begin{proof}
% $\Rightarrow$) Suppose (H) $x \leq a_1 \vee \cdots \vee a_n$. 
% Also, suppose (S) $y \in (\{x,a_1\}^l \cup \{x,a_2\}^l \cup \cdots \cup \{x,a_n\}^l)^u$. 
% Our goal is to see that $x \leq y$. 
% Consider {\bf (D$_{\vee_n}$)} and take $c=y$. 
% As we have (H) and $x \leq x$, using {\bf (D$_{\vee_n}$)}, 
% our goal follows from $\{x,a_1\}^l \cup \{x,a_2\}^l \cup \cdots \cup \{x,a_n\}^l \subseteq \{y \}^l$. 
% Take, for example, $z \in \{x,a_1\}^l$. 
% Then, using (S), it follows that $z \leq y$, that is, $z \in \{ y\}^l$. 
% 
% $\Leftarrow$) Suppose (H) $\{x,a_1\}^l \cup \{x,a_2\}^l \cup \cdots \cup \{x,a_n\}^l \subseteq \{c \}^l$. 
% In addition, suppose (i) $y \leq x$ and (ii) $y \leq a_1 \vee \cdots \vee a_n$. 
% Our goal is to prove that $y \leq c$. 
% Now, using {\bf (C)}, it follows that (ii) implies 
% (iii) $y \in (\{x,a_1\}^l \cup \{x,a_2\}^l \cup \cdots \cup \{x,a_n\}^l)^{ul}$, which is equivalent to 
% for all $z$ (if $z \in (\{x,a_1\}^l \cup \{x,a_2\}^l \cup \cdots \cup \{x,a_n\}^l)^u$, then $y \leq z$. 
% Given our goal, it is enough to prove that $c \in (\{x,a_1\}^l \cup \{x,a_2\}^l \cup \cdots \cup \{x,a_n\}^l)^u$. 
% Accordingly, let us suppose that $w$ is such that (S) $w \in \{x,a_1\}^l \cup \{x,a_2\}^l \cup \cdots \cup \{x,a_n\}^l$. 
% Our new goal is $w \leq c$. 
% Now, using (i) and $\leq$-transitivity, (S) implies that $w \in \{x,a_1\}^l \cup \{x,a_2\}^l \cup \cdots \cup \{x,a_n\}^l$. 
% So, by (H), $w \in \{ c \}^l$, that is, $w \leq c$. 
%\end{proof}

\noindent In turn, let us see that {\bf (D$_{\vee_n}$)} is equivalent to {\bf (D$_\vee$)}, 
which proves that having more than two arguments does not make any difference.  

\begin{lem}
 {\bf (D$_{\vee_n}$)} is equivalent to {\bf (D$_\vee$)}. 
\end{lem}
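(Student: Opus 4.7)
The plan is a straightforward induction on $n$, where the base case $n=2$ is literally $\mathbf{(D_\vee)}$ and the inductive step uses $\mathbf{(D_\vee)}$ once together with the associativity of $\vee$ to peel off one disjunct.

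First, for the direction $\mathbf{(D_{\vee_n})} \Rightarrow \mathbf{(D_\vee)}$, I would simply specialize $\mathbf{(D_{\vee_n})}$ to $n=2$; this is immediate. (One may also dispose of the degenerate case $n=1$, where $\mathbf{(D_{\vee_1})}$ reduces to the tautology $\{x,a_1\}^l \subseteq \{c\}^l \Rightarrow \{x,a_1\}^l \subseteq \{c\}^l$.)

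For the converse $\mathbf{(D_\vee)} \Rightarrow \mathbf{(D_{\vee_n})}$, I would argue by induction on $n \geq 2$. The base case $n=2$ is exactly $\mathbf{(D_\vee)}$. For the inductive step, assume $\mathbf{(D_{\vee_n})}$ holds and suppose that $\{x,a_1\}^l \cup \cdots \cup \{x,a_{n+1}\}^l \subseteq \{c\}^l$. Since in particular $\{x,a_1\}^l \cup \cdots \cup \{x,a_n\}^l \subseteq \{c\}^l$, the inductive hypothesis yields $\{x,\, a_1 \vee \cdots \vee a_n\}^l \subseteq \{c\}^l$. Setting $b := a_1 \vee \cdots \vee a_n$, we therefore have both $\{x,b\}^l \subseteq \{c\}^l$ and $\{x,a_{n+1}\}^l \subseteq \{c\}^l$, so applying $\mathbf{(D_\vee)}$ with $h := x$, $a := b$, $b := a_{n+1}$ gives $\{x,\, b \vee a_{n+1}\}^l \subseteq \{c\}^l$. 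By associativity of $\vee$, $b \vee a_{n+1} = a_1 \vee \cdots \vee a_{n+1}$, and the inductive step is complete.

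There is no real obstacle here; the only points requiring a touch of care are keeping the variable names for $\mathbf{(D_\vee)}$ straight when renaming ($h,a,b,c$ versus $x,a_1,\ldots,a_n,c$) and invoking associativity of the semilattice join to identify $(a_1 \vee \cdots \vee a_n) \vee a_{n+1}$ with $a_1 \vee \cdots \vee a_{n+1}$. No structural hypothesis beyond being a join-semilattice is used, so the equivalence holds in full generality as stated.
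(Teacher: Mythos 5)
Your proof is correct and follows essentially the same route as the paper: the paper proves the representative case $n=3$ by two successive applications of {\bf (D$_\vee$)} (grouping $a_2 \vee a_3$ first), which is exactly the inductive step you carry out in general form. The only difference is that you write out the induction explicitly and peel off the last disjunct rather than the first, which is immaterial.
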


\begin{proof}
 We just prove that {\bf (D$_\vee$)} implies {\bf (D$_{\vee_3}$)}, the reciprocal being immediate. 
% We take {\bf (D$_{\vee_3}$)} using $h$ instead of $x$. 
% In order to see that {\bf (D$_{\vee_3}$)} implies {\bf (D$_\vee$)}, 
% just take {\bf (D$_{\vee_3}$)} with $a_1=a$, $a_2=b$ and then use $v$-idempotency. 
 Let us suppose $\{h,a_1\}^l \cup \{h, a_2\}^l \cup \{x, a_3\}^l \subseteq \{c \}^l$. 
 Then, we get both $\{h,a_1\}^l \subseteq \{c \}^l$ and $\{h, a_2\}^l \cup \{x,a_3\}^l \subseteq \{c \}^l$, 
 the last of which, using {\bf (D$_\vee$)}, implies that $\{h, a_2 \vee a_3 \}^l \subseteq \{c \}^l$, 
 which, together with the first, using {\bf (D$_\vee$)} again, 
 finally implies that $\{h, a_1 \vee a_2 \vee a_3 \}^l \subseteq \{c \}^l$. 
\end{proof}

\noindent As a consequence, 
H-distributivity coincides with the notion of DN-distributivity for join-semilattices introduced in Section \ref{DaND}. 
Accordingly, we have the following proposition.

\begin{prop} \label{Hd}
A join-semilattice ${\bf{J}} = (J; \leq)$ is H-distributive iff it is ND-distributive. 
\end{prop}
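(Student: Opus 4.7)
The plan is to derive the proposition by chaining the equivalences established in the material just above. The discussion preceding the two lemmas already rewrites Hickman's condition \textbf{(H)} as the set-theoretic condition \textbf{(C)}: using Lemma~\ref{BL}, the meet-semilattice formulation ``$x \leq (x \wedge a_1) \vee \cdots \vee (x \wedge a_n)$'' is replaced by the equivalent statement $x \in (\{x,a_1\}^l \cup \cdots \cup \{x,a_n\}^l)^{ul}$, which makes sense in any poset and in particular in a bare join-semilattice. The first of the two lemmas then gives \textbf{(C)} $\Leftrightarrow$ \textbf{(D$_{\vee_n}$)}, and the second gives \textbf{(D$_{\vee_n}$)} $\Leftrightarrow$ \textbf{(D$_\vee$)}.

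Since ND-distributivity is by definition satisfaction of \textbf{(D$_\vee$)}, the whole proof reduces to the one-line chain
\[
\textbf{(H)} \;\Leftrightarrow\; \textbf{(C)} \;\Leftrightarrow\; \textbf{(D$_{\vee_n}$)} \;\Leftrightarrow\; \textbf{(D$_\vee$)},
\]
with the first link supplied by the rewriting just discussed, the second by the first lemma, and the third by the second lemma. There is essentially no obstacle left: the genuine content sits in the two preceding lemmas, and the proposition merely packages their combined outcome under the label ``H-distributive iff ND-distributive''. The only step that might demand a second glance is the initial reinterpretation of Hickman's meet-semilattice wording inside a pure join-semilattice, but that is precisely what Lemma~\ref{BL} was set up to make routine; writing the proof out should therefore amount to little more than citing the three equivalences and invoking the two definitions.
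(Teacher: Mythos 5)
Your proposal matches the paper's own treatment: the paper likewise obtains Proposition~\ref{Hd} as an immediate consequence of rewriting \textbf{(H)} as \textbf{(C)} via Lemma~\ref{BL} and then chaining the two lemmas \textbf{(C)} $\Leftrightarrow$ \textbf{(D$_{\vee_n}$)} and \textbf{(D$_{\vee_n}$)} $\Leftrightarrow$ \textbf{(D$_\vee$)}. Nothing is missing; the substantive work is indeed in the preceding lemmas, exactly as you say.
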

}

{%\color{blue}
Analogously to Propositions \ref{dJedi} and \ref{Kiff}, 
we also have a characterization of H-distributivity for join-semilattices in terms of distributivity of a sublattice of their ideals. 
This appears as Corollary 2.4 in \cite[p. 290]{H}), 
where $Id_{fp}({\bf{J}})$ denotes the set  $\{(a_1] \cap \dots \cap (a_k] : a_1,...a_k \in J \}$, 
that is, the set of ideals that are intersection of a finite set of principal ideals of the join-semilattice ${\bf{J}}=(J; \leq)$. 
%{\color{red} qu\'e significan los asteriscos que siguen?} ***where, as usual, 
%the principal ideal generated by an element $a \in J$, noted $(a]$, is defined
%as $\{x \in J: x \leq a\}$. 

\begin{prop} \label{Hiff}
 Let ${\bf{J}}$ be a join-semilattice. 
 Then, ${\bf{J}}$ is H-distributive iff $Id_{fp}({\bf{J}})$ is distributive. 
\end{prop}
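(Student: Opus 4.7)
The plan is to verify both directions by understanding what joins look like in $Id_{fp}({\bf{J}})$. Any $I \in Id_{fp}({\bf{J}})$ can be written $I = \bigcap_i (a_i]$ for finitely many $a_i$'s; meets in $Id_{fp}({\bf{J}})$ are just intersections (concatenating the lists of generators), so $Id_{fp}({\bf{J}})$ is automatically a $\cap$-semilattice and all the content lies in the joins.

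For the $(\Leftarrow)$ direction I assume $Id_{fp}({\bf{J}})$ is a distributive lattice and aim at condition {\bf(C)}. Given $x \leq a_1 \vee \cdots \vee a_n$, I would set $I_i = (x] \cap (a_i] = \{x, a_i\}^l \in Id_{fp}({\bf{J}})$. A preliminary observation is that $\bigvee_i (a_i] = (a_1 \vee \cdots \vee a_n]$ in $Id_{fp}({\bf{J}})$: the right-hand side lies in $Id_{fp}({\bf{J}})$ and contains each $(a_i]$, while any element of $Id_{fp}({\bf{J}})$ containing every $(a_i]$ is an ideal and thus contains $a_1 \vee \cdots \vee a_n$. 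Using distributivity extended to $n$ arguments, $\bigvee_i I_i = \bigvee_i ((x] \cap (a_i]) = (x] \cap \bigvee_i (a_i] = (x] \cap (a_1 \vee \cdots \vee a_n] = (x]$, where the last equality uses the hypothesis on $x$. Any upper bound $y$ of $\bigcup_i I_i$ in ${\bf{J}}$ yields $(y] \in Id_{fp}({\bf{J}})$ with $\bigcup_i I_i \subseteq (y]$, so $(x] = \bigvee_i I_i \subseteq (y]$, i.e.\ $x \leq y$. This shows $x \in (\bigcup_i \{x, a_i\}^l)^{ul}$, which is {\bf(C)}, equivalent to H-distributivity.

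For the $(\Rightarrow)$ direction I assume H-distributivity and show $Id_{fp}({\bf{J}})$ is a distributive lattice. The crux is the join formula $\bigcap_i (a_i] \vee \bigcap_j (b_j] = \bigcap_{i,j} (a_i \vee b_j]$, where the outer $\vee$ is the join in $Id_{fp}({\bf{J}})$. The right-hand side lies in $Id_{fp}({\bf{J}})$ and trivially contains both factors; for the least-upper-bound property, if $L = \bigcap_k (c_k]$ contains $I \cup J$, then for each fixed $k$ the hypotheses $\{a_1, \ldots, a_n\}^l \subseteq (c_k]$ and $\{b_1, \ldots, b_m\}^l \subseteq (c_k]$ reduce the task to showing that $y \leq c_k$ whenever $y \leq a_i \vee b_j$ for all $i, j$. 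I would prove this by iterated application of {\bf(D$_{\vee}$)}: the instance with parameters $(y, a_{i_0}, b_{j_0})$ and $y \leq a_{i_0} \vee b_{j_0}$ reduces the goal to showing $\{y, a_{i_0}\}^l \subseteq (c_k]$ and $\{y, b_{j_0}\}^l \subseteq (c_k]$, and each of those sub-goals is attacked by the same method on the element in question, now known to be below one more generator than before. After at most $n + m$ such peelings the accumulated lower bound lies inside $\{a_1, \ldots, a_n\}^l$ or $\{b_1, \ldots, b_m\}^l$, and the hypothesis on $L$ applies.

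Once the join formula is in hand, distributivity of $Id_{fp}({\bf{J}})$ is a direct computation: with $I = \bigcap_p (a_p]$, $J = \bigcap_q (b_q]$, $K = \bigcap_r (c_r]$, expanding $(I \cap J) \vee (I \cap K)$ by the formula produces four groups of terms, namely $\bigcap_{p,p'}(a_p \vee a_{p'}]$, $\bigcap_{p,r}(a_p \vee c_r]$, $\bigcap_{q,p'}(b_q \vee a_{p'}]$, and $\bigcap_{q,r}(b_q \vee c_r]$. The first group collapses to $\bigcap_p (a_p] = I$ (setting $p = p'$), while the next two contain $I$ as a subset (since $(a_p] \subseteq (a_p \vee x]$ for any $x$) and are therefore absorbed; the whole expression reduces to $I \cap \bigcap_{q, r}(b_q \vee c_r] = I \cap (J \vee K)$. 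The main obstacle is the inductive bookkeeping underlying the join formula; once that is settled, the rest is elementary ideal arithmetic.
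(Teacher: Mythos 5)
Your argument is correct, but note that the paper does not actually prove this proposition: it only cites Corollary~2.4 of Hickman's paper, where the result is obtained (for meet-semilattices and strong ideals) by other means, so what you have written is a self-contained alternative rather than a reconstruction of the paper's proof. The backbone of your approach is the explicit join formula $\bigcap_i (a_i] \vee \bigcap_j (b_j] = \bigcap_{i,j} (a_i \vee b_j]$ in $Id_{fp}({\bf{J}})$, which simultaneously shows that $Id_{fp}({\bf{J}})$ is a lattice and reduces distributivity to the index-shuffling computation you give; the converse direction, deriving {\bf (C)} from $(x] = \bigvee_i \bigl((x]\cap(a_i]\bigr)$ via generalized distributivity, is clean and complete. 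The one step you should write out in full is the termination of the iterated application of {\bf{(D$_{\vee}$)}}: the invariant is that every element reached in the recursion is below the previous one, hence still below all the joins $a_i \vee b_j$, and is known to lie below one additional generator at each step; the recursion can only stop when the current element lies below all the $a_i$ or below all the $b_j$, i.e.\ in $\{a_1,\dots,a_n\}^l$ or in $\{b_1,\dots,b_m\}^l$, both of which are contained in $(c_k]$ by hypothesis. This does check out, so the gap is expository rather than mathematical. A cosmetic caveat, which the paper itself shares: if ${\bf{J}}$ is not downward directed, some members of $Id_{fp}({\bf{J}})$ are empty, so one must either admit $\emptyset$ as a bottom element of $Id_{fp}({\bf{J}})$ or restrict to non-empty intersections; your argument survives under either convention.
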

}

Let us now compare H- with K-distributivity. 

\begin{prop} \label{KH}
Let ${\bf{J}} = (J; \leq)$ be a join-semilattice. 
Then, K-distributivity implies H-distributivity.  
\end{prop}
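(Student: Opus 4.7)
The plan is to prove K-distributivity implies ND-distributivity, since by Proposition \ref{Hd} the latter is equivalent to H-distributivity. So I would assume $\bf J$ is K-distributive, fix $h, a, b, c \in J$ with $\{h,a\}^l \cup \{h,b\}^l \subseteq \{c\}^l$, and aim to establish $\{h, a \vee b\}^l \subseteq \{c\}^l$. Picking an arbitrary $x$ with $x \leq h$ and $x \leq a \vee b$, the task reduces to verifying $x \leq c$.

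The natural move is to split into three cases according to whether $x \leq a$, $x \leq b$, or neither. In the first case, $x \in \{h,a\}^l$, so $x \leq c$ is immediate from the hypothesis; the second case is symmetric. In the third case, the full premise of {\bf(K)} is in force (we have $x \leq a \vee b$, $x \nleq a$, $x \nleq b$), so K-distributivity supplies $a', b' \in J$ with $a' \leq a$, $b' \leq b$, and $x = a' \vee b'$.

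To close the argument, I would observe that $a', b' \leq a' \vee b' = x \leq h$, so that $a' \in \{h,a\}^l$ and $b' \in \{h,b\}^l$. By the standing hypothesis both are then below $c$, whence $x = a' \vee b' \leq c$, as required.

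I do not expect any real obstacle here: {\bf(K)} is tailor-made to produce a decomposition of any $x \leq a \vee b$ into components below $a$ and $b$ respectively (outside the trivial situations where $x$ already lies below $a$ or below $b$), and once such a decomposition is at hand, applying the hypothesis $\{h,a\}^l \cup \{h,b\}^l \subseteq \{c\}^l$ to each component is exactly what is needed. The only minor point of care is remembering to dispatch the two ``trivial'' cases $x \leq a$ and $x \leq b$ separately, since {\bf(K)} itself does not apply to them.
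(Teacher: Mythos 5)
Your proposal is correct and follows essentially the same argument as the paper's proof: the same three-way case split on $x \leq a$, $x \leq b$, or neither, with {\bf(K)} applied in the third case to decompose $x = a' \vee b'$ and the hypothesis applied to each component. The only difference is cosmetic — you phrase the hypothesis in the set-theoretic form $\{h,a\}^l \cup \{h,b\}^l \subseteq \{c\}^l$ while the paper uses the equivalent quantified form.
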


\begin{proof}
Suppose 

\begin{itemize}
\item[(x1)] for all $x \in J$, if $x \leq h$ and $x \leq a$, then $x \leq c$ and

\item[(x2)] for all $x \in J$, if $x \leq h$ and $x \leq b$, then $x \leq c$.
\end{itemize}
\noindent Further, suppose both (S1) $x \leq h$ and (S2) $x \leq a \vee b$.
The goal is to prove $x \leq c$. 
Let us suppose that $x \leq a$. 
Then, using (x1) and (S1), it follows that $x \leq c$. 
The case $x \leq b$ is analogous using (x2). 
Finally, suppose both $x \nleq a$ and $x \nleq b$. 
Using (K) and (S2) it follows that there exist $a', b' \in J$ such that $a' \leq a$, $b' \leq b$, and (F) $x = a' \vee b'$, 
which implies $a' \leq x$, which using (S1) gives $a' \leq h$. 
As we also have $a' \leq a$, using (x1) we get $a' \leq c$. 
Reasoning analogously, we get $b' \leq c$. 
So, using (F) it follows that $x \leq c$.  
\end{proof}

The reciprocal of Proposition \ref{KH} does not hold considering the model in Figure \ref{HK} 
(with the understanding that there is no element in the white node). 
The given model appears as a poset in \cite[Figure 2.7, p. 37]{Go}.\footnote{ 
We thank the author of that paper for communicating this example.}  
We provide a proof using the characterization of K- and H-distributivity by their ideals (Propositions \ref{Kiff} and  \ref{Hiff}).

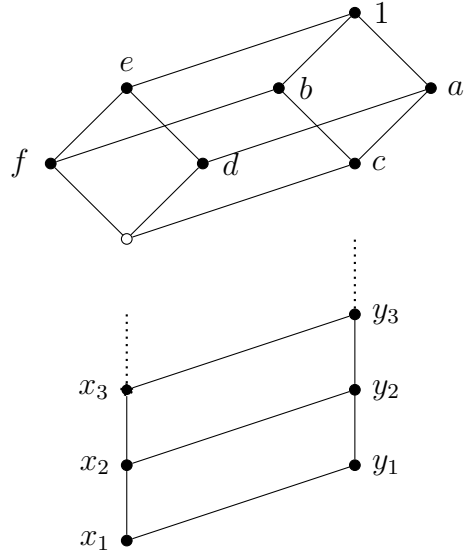
\begin{figure} [ht]
\begin{center}

\begin{tikzpicture}

    \tikzstyle{every node}=[draw, circle, fill=black, minimum size=4pt, inner sep=0pt, label distance=1mm]
        
        \draw			(0,0)		node (x1)	[label=left: $x_1$]	{};
        \draw			(x1)--(3,1)	node (y1)	[label=right: $y_1$]	{};
        \draw			(y1)--(3,2)	node (y2)	[label=right: $y_2$]	{};
        \draw			(y2)--(3,3)	node (y3)	[label=right: $y_3$]	{};
        \draw [dotted, thick]	(y3)--(3,4); 	
        \draw 			(3, 5) 		node (c) 	[label=right: $c$]	{};
        \draw			(c)--(4,6)	node (a)	[label=right: $a$]	{};
        \draw			(a)--(3,7)	node (top)	[label=right: $1$]	{};
        
        \draw 			(top)--(0,6)	node (e)	[label=above: $e$]	{};
        \draw 			(e)--(1,5)	node (d)	[label=right: $d$]	{};
        \draw 			(d)--(a);
        \draw 			(c)--(2,6)	node (b)	[label=right: $b$]	{};
        \draw 			(b)--(top);
        \draw 			(e)--(-1,5)	node (f)	[label=left: $f$]	{};
        \draw 			(b)--(f);
        \draw 			(f)--(0,4)	node (df)	[fill=white]		{};
        \draw 			(c)--(df);
        \draw 			(d)--(df);
        
        \draw [dotted, thick]	(0,3)--(0,2)	node (x3) 	[label=left: $x_3$] 	{};
        \draw			(x3)--(0,1)	node (x2)	[label=left: $x_2$]	{};
        \draw			(x2)--(x1);
        \draw			(x2)--(y2);
        \draw			(x3)--(y3);
 
\end{tikzpicture}

\end{center}
\caption{\label{HK} H-distributive, but not K-distributive join-semilattice}
\end{figure}

{%\color{blue}
\begin{prop}
 H-distributivity does not imply K-distributivity. 
\end{prop}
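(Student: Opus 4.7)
By Propositions~\ref{Kiff} and~\ref{Hiff}, it suffices, for the semilattice ${\bf{J}}$ depicted in Figure~\ref{HK}, to check (a) that $Id({\bf{J}}) \cup \{\emptyset\}$ is \emph{not} distributive and (b) that $Id_{fp}({\bf{J}})$ \emph{is} distributive.

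For (a), I will test distributivity on the three principal ideals $(c]$, $(d]$, and $(y_i]$ for some fixed $i$. Since $d \vee y_i = a$, one has $(d] \vee (y_i] = (a]$, and so $(c] \cap ((d] \vee (y_i]) = (c]$. In the other direction, the common lower bounds of $c$ and $d$ are exactly the elements of the infinite ascending chain $X = \{x_j : j \geq 1\}$, which is itself a (non-principal) ideal; hence $(c] \cap (d] = X$, while $(c] \cap (y_i] = (y_i]$. Closing $X \cup (y_i]$ under binary joins and then downward yields $(c] \setminus \{c\}$, a proper subset of $(c]$. So distributivity fails in $Id({\bf{J}}) \cup \{\emptyset\}$ and K-distributivity fails.

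For (b), I will describe $Id_{fp}({\bf{J}})$ explicitly: a case analysis of finite intersections of principal ideals shows that $Id_{fp}({\bf{J}})$ consists of all the principal ideals together with the single extra element $X$ (which already appears as $(c] \cap (d]$; every other finite intersection either collapses to some principal ideal or equals $X$). With this description in hand, meets are ordinary intersections and joins are computed as the smallest element of $Id_{fp}({\bf{J}})$ containing the union. The key observation that saves distributivity is that the non-principal ideal $(c] \setminus \{c\}$ used in (a) does \emph{not} belong to $Id_{fp}({\bf{J}})$: any finite intersection of principal ideals containing both the whole $x$-chain and the whole $y$-chain must also contain $c$, since every element of ${\bf{J}}$ that is an upper bound of both chains is already $\geq c$. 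Consequently the join of $X$ and $(y_i]$ inside $Id_{fp}({\bf{J}})$ is $(c]$, and the instance that broke (a) becomes the valid equality $(c] \cap ((d] \vee (y_i]) = (c] = ((c] \cap (d]) \vee ((c] \cap (y_i])$.

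The main obstacle is the enumeration step: certifying that $X$ is the \emph{only} non-principal finite intersection of principal ideals, so that $Id_{fp}({\bf{J}})$ really has this simple shape. Once that is settled, verifying distributivity reduces to a routine and uniform case check on a small lattice.
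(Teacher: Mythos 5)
Your proposal is correct and follows essentially the same route as the paper: both reduce the claim to Propositions~\ref{Kiff} and~\ref{Hiff} applied to the join-semilattice of Figure~\ref{HK}, identify $Id_{fp}({\bf{J}})$ as the principal ideals together with the single extra ideal $X=\{x_j : j\geq 1\}$, and observe that $Id({\bf{J}})$ additionally contains the ideal $(c]\setminus\{c\}$ generated by the $y_j$'s, which is what destroys distributivity. The only cosmetic difference is that you exhibit a direct failing instance of the distributive law for the triple $(c],(d],(y_i]$, whereas the paper points to the pentagon on $(a],(d],(c],(c]\setminus\{c\},X$; these are two descriptions of the same obstruction.
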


\begin{proof} 
 Let us characterize the sets $Id_{fp}(J)$ and $Id(J)$, where $(J, \leq)$ is the join-semilattice of Figure \ref{HK}. 
 An easy computation proves, on the one hand, that $Id_{fp}(J)$ is isomorphic to the ordered set of Figure \ref{HK} plus the ideal $I_{\overline{x}} = (f] \wedge (d]$,  
 whose elements are $\{x_i : i \in w\}$, that does not exist in the original join-semilattice. 
 On the other hand, $Id(J)$ is the set of ideals in $Id_{fp}(J)$ plus the ideal ${I}_{\overline{y}}$ generated by the set $\{y_i : i \in w\}$, 
 that is, the ideal with elements ${I}_{\overline{y}} = \{y_i : i \in w\} \cup \{x_i : i \in w\}$. 
 Clearly, this ideal is not a finite intersection of principal ideals. 
 Both $Id_{fp}(J)$ and $Id(J)$ are lattices. 
 Moreover, it is obvious that $Id_{fp}(J)$ is a distributive lattice and thus the join-semilattice of the example is H-distributive. 
 But this is not the case for $Id(J)$, 
 since it has a sublattice isomorphic to the pentagon formed by the elements $(a]$, $(d]$, $(c]$, ${I}_{\overline{y}}$, and ${I}_{\overline{x}}$. 
 Thus, the join-semilattice of the example is not K-distributive.
\end{proof}

}

%{\color{red}
%
%In order to prove this statement we use the following results in \cite{H, K}. 
%\noindent The results we will use are the following:
%
%\begin{itemize}
%\item  A join-semillatice $(A;\vee)$ is H-distributive iff $\mathcal{I}_f(A)$ forms
%a distributive semilattice [Hickman, 2.4].
%\item  A join-semillatice $(A;\vee)$ is K-distributive iff $\mathcal{I}(A)$ forms a
%distributive semilattice.[Katri\v{n}\'ak, 1.6] 
%\end{itemize}
%
%Now we proceed to characterise the sets $\mathcal{I}_f(A)$ and $\mathcal{I}(A)$, 
%$(A; \vee)$ being the join-semilattice of Figure \ref{HK}. 
%An easy computation proves that:
%
%\begin{itemize}
%\item $\mathcal{I}_f(A)$ is isomorphic to the ordered set of Figure \ref{HK} plus the ideal $I_{\{x_i\}} = (f] \wedge (d]$ 
%whose elements are $\{x_i : i \in w\}$ that does not exist in the original join semilattice. 
%\item $\mathcal{I}(A)$ is the set of ideals in $\mathcal{I}_f(A)$ plus the ideal generated by the set $\{y_i : i \in w\}$, 
%that is, the ideal $I_{\{y_i\}}$ whose elements are $\{y_i : i \in w\} \cup \{x_i : i \in w\}$ 
%and it is not a finite intersection of principal ideals.
%\end{itemize}
%
%Both $\mathcal{I}_f(A)$ and $\mathcal{I}(A)$ are lattices. 
%Moreover, it is obvious that $\mathcal{I}_f(A)$ is a distributive lattice and thus the join-semilattice of the example is H-distributive. 
%But this is not the case for $\mathcal{I}(A)$, 
%since it has a sub-lattice isomorphic to a pentagon formed by the elements $(a], (d], (c], I_{\{y_i\}}, I_{\{x_i\}}$. 
%Thus the join-semilattice of the example is not K-distributive.
%
%}

\smallskip

It is natural to ask whether it is possible to find a finite example in order to prove the reciprocal of Proposition \ref{KH}. 
Let us see that the answer is negative. 

\begin{prop}
For finite join-semilattices,  H-distributivity and K-distributivity coincide. 
\end{prop}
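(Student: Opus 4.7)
The plan is to prove only the implication H-distributive $\Rightarrow$ K-distributive, since the converse is already Proposition \ref{KH}. So I assume ${\bf{J}} = (J; \leq)$ is a finite H-distributive join-semilattice, fix $a, b, x \in J$ with $x \leq a \vee b$, $x \nleq a$ and $x \nleq b$, and aim to exhibit $a' \leq a$ and $b' \leq b$ such that $x = a' \vee b'$.

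The observation that makes the finite case work is that in any finite join-semilattice, the set $\{u,v\}^l$ of common lower bounds of two elements is always a principal ideal whenever nonempty. Indeed, if $p, q \leq u$ and $p, q \leq v$ then $p \vee q \leq u$ and $p \vee q \leq v$, so $\{u,v\}^l$ is closed under binary joins; being nonempty and finite it must therefore contain its own join, which serves as its maximum $p_{u,v} := \bigvee \{u,v\}^l$, and $\{u,v\}^l = (p_{u,v}]$. So, as long as $\{x,a\}^l$ and $\{x,b\}^l$ are nonempty, I define $a' := p_{x,a}$ and $b' := p_{x,b}$, the \emph{local meets} of $x$ with $a$ and $b$.

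Now I apply the formulation (C) of H-distributivity with $n = 2$, available via Proposition \ref{Hd} and the lemmas equating (C) with (D$_{\vee}$): $x \in (\{x,a\}^l \cup \{x,b\}^l)^{ul}$. Provided both local meets exist, $\{x,a\}^l \cup \{x,b\}^l = (a'] \cup (b']$, whose upper bounds are exactly the elements above $a' \vee b'$, so $(\{x,a\}^l \cup \{x,b\}^l)^{ul} = (a' \vee b']$. Condition (C) then forces $x \leq a' \vee b'$, while the reverse inequality $a' \vee b' \leq x$ is immediate from $a', b' \leq x$. Hence $x = a' \vee b'$ with $a' \leq a$ and $b' \leq b$, which is exactly the K-distributive witness.

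The one step requiring care, and the main obstacle I expect, is to rule out the degenerate cases where $\{x,a\}^l$ or $\{x,b\}^l$ is empty. If both are empty, condition (C) forces $x \in (\emptyset)^{ul} = J^l$, making $x$ a minimum of ${\bf{J}}$ and in particular $x \leq a$, contrary to hypothesis. If only one is empty, say $\{x,a\}^l = \emptyset$, the same computation collapses to $(\{x,a\}^l \cup \{x,b\}^l)^{ul} = (b']$, yielding $x \leq b$, again contradicting the hypotheses; the symmetric case is identical. So both local meets exist, and the main argument goes through unconditionally.
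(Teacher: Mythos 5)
Your proof is correct and follows essentially the same route as the paper: the witnesses are $a'=\bigvee\{x,a\}^l$ and $b'=\bigvee\{x,b\}^l$ (which exist by finiteness and closure of $\{u,v\}^l$ under joins), their nonemptiness is ruled out by contradiction via H-distributivity, and H-distributivity is applied once more to get $x\le a'\vee b'$. The only difference is that you invoke the set-theoretic formulation {\bf (C)} and compute the $ul$-closure $((a']\cup(b'])^{ul}=(a'\vee b']$ directly, whereas the paper runs the same argument through the quantifier form of {\bf (D$_{\vee}$)}; these are equivalent by the paper's own lemmas.
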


\begin{proof}
Consider a finite H-distributive join-semilattice. 
We want to see that it is K-distributive. 
Accordingly, suppose $x \leq a \vee b$, $x \nleq a$, and $x \nleq b$. 
It is natural to consider $\bigvee \{ a, x\}^l$ and $\bigvee \{ b, x\}^l$ 
as candidates for $a'$ and $b'$ in the definition of K-distributivity. 
Now, in order to do that, we first need to prove that the sets $\{ a, x\}^l$ and $\{ b, x\}^l$ are not empty. 
Suppose, say, $\{ a, x\}^l = \emptyset$. 
Then, we have :

\begin{itemize}
\item[] - for all $y$, if $y \leq x$ and $y \leq a$, then $y \leq b$ (as $\{ a, x\}^l = \emptyset$), 

-  for all $y$, if $y \leq x$ and $y \leq b$, then $y \leq b$,  

-  $x \leq x$, and 

-  $x \leq a \vee b$. 
\end{itemize}
\noindent So, using H-distributivity, it follows that $x \leq b$, a contradiction.

Having proved that both $\{ a, x\}^l \neq \emptyset$ and $\{ b, x\}^l \neq \emptyset$, 
let us note that both $\bigvee \{ a, x\}^l$ and $\bigvee \{ b, x\}^l$ exist, due to having a finite structure. 
Next, let us see that $\bigvee \{ a, x\}^l =$ inf $\{ a, x \}$ (analogously, $\bigvee \{ b, x\}^l =$ inf $\{ b, x \}$). 
It is clear that both $\bigvee \{ a, x\}^l \leq a$ and $\bigvee \{ a, x\}^l \leq x$. 
Now, suppose $y \leq a, x$. 
Then, $y \in \{ a, x\}^l$, and so, $y \leq \bigvee \{ a, x\}^l$, as desired.   

It remains to be seen that 1) inf $\{ a, x \} \leq a$, 2) inf $\{ b, x \} \leq b$, 
and 3) $x =$ inf$\{ a, x \} \vee$ inf$\{ b, x \}$. 
Now, 1) and 2) are easy to see. 
Regarding 3), as we have both that inf$\{ a, x \} \leq x$ and sup $\{ b, x \} \leq x$, 
it follows that inf$\{ a, x \} \ \vee$ \ inf$\{ b, x \} \leq x$. 
Finally, observe that the inequality $x \leq$ inf$\{ a, x \} \ \vee$ inf$\{ b, x \}$ 
follows from 

\begin{itemize}
\item[] - for all $y$, if $y \leq x$ and $y \leq a$, then $y \leq$ inf$\{ a, x \} \ \vee$ inf$\{ b, x \}$, 

- for all $y$, if $y \leq x$ and $y \leq b$, then $y \leq$ inf$\{ a, x \} \ \vee$ inf$\{ b, x \}$,  

- $x \leq x$, and 

- $x \leq a \vee b$, 
\end{itemize}
using H-distributivity.
\end{proof}

In fact, it is easy to observe that in the case of a finite join-semilattice $J$, 
the sets of ideals $Id(J)$ and $Id_{fp}(J)$ coincide since, 
for any two elements $a, b$, either there is no lower bound, 
that is, $\{a, b\}^l = \emptyset$, or there exists their meet $a \land b = \bigvee \{ a, b\}^l$.

\subsection{LR-distributivity}

Larmerov\'a-Rach\r{u}nek version of distributivity (see \cite{LR}) was given for posets, as we next see.  

\begin{defi} \label{LRPd}
A poset ${\bf{P}} = (P; \leq)$ is \emph{LR-distributive} iff 

\begin{itemize}
\item[{\bf (LRP)}]  for all $a, b, c \in P$, 
$(\{ c, a \}^l \cup \{ c, b \}^l)^{ul} = (\{ c \} \cup \{ a, b \}^u)^l$. 
\end{itemize}
\end{defi}

\begin{rem}
 In the given definition, it is enough to take one inclusion. 
 Indeed, given a poset ${\bf{P}} = (P; \leq)$ and $a, b, c \in P$, it is always the case that 
 $(\{ c, a \}^l \cup \{ c, b \}^l)^{ul} \subseteq (\{ c \} \cup \{ a, b \}^u)^l$. 
\end{rem}

It is natural to ask for LR-distributivity in the case of a join-semilattice. 
The following definition follows from the fact that 
in a join-semilattice ${\bf{J}} = (J, \leq)$ it holds that $(\{ c \} \cup \{ a, b \}^u)^l = \{ c,  a \vee b \}^l$. 

\begin{defi} \label{LRJd}
A join-semilattice ${\bf{J}} = (J, \leq)$ is \emph{LR-distributive} iff 

\begin{itemize}
\item[{\bf (LR)}] for all $a, b, c \in J$, 
$\{ c,  a \vee b \}^l \subseteq (\{ c, a \}^l \cup \{ c, b \}^l)^{ul}$. 
\end{itemize}
\end{defi}

Now, it can be seen that LR-distributivity is equivalent to H-distributivity, and hence to the condition {\bf{(D$_{\vee}$)}} as well. 

\begin{prop} 
 Let ${\bf{J}} = (J; \leq)$ be a join-semilattice. Then the following conditions are equivalent: 
 \begin{itemize}
 \item[(i)] ${\bf{J}}$ satisfies {\bf(LR)},
 \item[(ii)]  ${\bf{J}}$ satisfies {\bf(H)},
 \item[(iii)] ${\bf{J}}$ satifies  {\bf{(D$_{\vee}$)}.}
 \end{itemize}
\end{prop}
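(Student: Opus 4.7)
The plan is to piggy-back on what has already been proved. The equivalence (ii)$\Leftrightarrow$(iii) is exactly Proposition \ref{Hd}, so the only content left is to establish (i)$\Leftrightarrow$(iii), i.e.\ that (LR) and (D$_\vee$) say the same thing once one rewrites both as inclusions between sets of upper/lower bounds.

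First, I would translate (D$_\vee$) into its purely set-theoretic shape. Starting from the second formulation in Section \ref{DaND} (for all $h,a,b,c$, if $\{h,a\}^l\cup\{h,b\}^l\subseteq\{c\}^l$ then $\{h,a\vee b\}^l\subseteq\{c\}^l$), observe that the hypothesis says precisely that $c$ is an upper bound of $\{h,a\}^l\cup\{h,b\}^l$, i.e.\ $c\in(\{h,a\}^l\cup\{h,b\}^l)^u$, and the conclusion says $c\in\{h,a\vee b\}^{lu}$. Universally quantifying over $c$ shows that (D$_\vee$) is equivalent to the condition
$$
(\{h,a\}^l\cup\{h,b\}^l)^u \;\subseteq\; \{h,a\vee b\}^{lu}, \qquad \text{for all } h,a,b\in J.
$$

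Second, I would invoke the general poset fact that, for subsets $A,B\subseteq J$, one has $A\subseteq B^{ul}$ iff $B^u\subseteq A^u$ (a routine consequence of the Galois connection between $(-)^u$ and $(-)^l$: both sides unfold to ``every element of $A$ lies below every upper bound of $B$''). Applying this with $A=\{h,a\vee b\}^l$ and $B=\{h,a\}^l\cup\{h,b\}^l$ converts the displayed reformulation of (D$_\vee$) into
$$
\{h,a\vee b\}^l \;\subseteq\; (\{h,a\}^l\cup\{h,b\}^l)^{ul},
$$
which is precisely (LR) (after renaming $h$ to $c$). This gives (i)$\Leftrightarrow$(iii) and closes the cycle.

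I do not expect a genuine obstacle here: the whole argument is a dictionary translation between ``inclusion in a principal lower set'' and ``membership in an upper-bound set'', together with the Galois-connection lemma, which is elementary. The only point that deserves a line of care is checking that the Galois step is legitimate, but this is immediate from the definitions of $(-)^u$ and $(-)^l$ and requires no semilattice hypothesis.
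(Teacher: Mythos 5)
Your proposal is correct, but it closes the cycle along a different edge than the paper does. The paper also takes (ii)$\Leftrightarrow$(iii) from Proposition \ref{Hd}, but then proves (i)$\Leftrightarrow$(ii) directly by element-chasing: for (LR)$\Rightarrow$(H) it observes that the hypotheses (x1), (x2) say exactly that $c$ is an upper bound of $\{c,a\}^l\cup\{c,b\}^l$, so the membership $x\in(\{c,a\}^l\cup\{c,b\}^l)^{ul}$ delivered by (LR) can be instantiated at $c$ to give $x\leq c$; for (H)$\Rightarrow$(LR) it unfolds ``$y$ is an upper bound of $\{h,a\}^l\cup\{h,b\}^l$'' into the two universally quantified premises of (H). You instead prove (i)$\Leftrightarrow$(iii) by first universally quantifying (D$_\vee$) over $c$ to get $(\{h,a\}^l\cup\{h,b\}^l)^u\subseteq\{h,a\vee b\}^{lu}$ and then flipping this with the adjunction $A\subseteq B^{ul}\iff B^u\subseteq A^u$. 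Both arguments are sound and of comparable length; your Galois-connection step is essentially the abstract form of the instantiation trick the paper performs concretely (choosing $c$, respectively $y$, as the generic upper bound), so it is somewhat more conceptual and makes it transparent that no semilattice structure is used in that leg, while the paper's version keeps everything at the level of the first-order conditions it has been manipulating throughout. The one point worth stating explicitly in your write-up is the unfolding of the Galois fact (both sides say ``every element of $A$ is below every upper bound of $B$''), which you do flag; with that included there is no gap.
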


\begin{proof}
The equivalence between (ii) and (iii) is Prop. \ref{Hd}. Let us prove that {\bf(LR)} implies {\bf(H)}. 
 Suppose 
 \begin{itemize}
\item[] (x1)  for all $x \in J$, if $x \leq h$ and $x \leq a$, then $x \leq c$,

 (x2) for all $x \in J$, if $x \leq h$ and $x \leq b$, then $x \leq c$,  $x \leq h$ and $x \leq a \vee b$. 
 \end{itemize}
 Then, the last two inequalities imply $x \in \{ c,  a \vee b \}^l$. 
 So, using {\bf(LR)} we get that $x \in (\{ c, a \}^l \cup \{ c, b \}^l)^{ul}$.
 That is, for all $y \in J$, if $y \in (\{ c, a \}^l \cup \{ c, b \}^l)^u$, then $x \leq y$. 
 Now, it should be clear that (x1) and (x2) imply that $c \in (\{ c, a \}^l \cup \{ c, b \}^l)^u$. 
 So, $x \leq c$, as desired. 
 
 Now, let us see that {\bf(H)} implies {\bf(LR)}. 
 Suppose $x \in \{ h,  a \vee b \}^l$, that is, (H1) $x \leq h$ and (H2) $x \leq a \vee b$. 
 In order to get our goal, that is, $x \in (\{ h, a \}^l \cup \{ h, b \}^l)^{ul}$, 
 let us suppose that (S) $y \in (\{ h, a \}^l \cup \{ h, b \}^l)^u$ and try to derive $x \leq y$. 
 Now, (S) means that for all $z \in J$, if $z \in (\{ h, a \}^l \cup \{ h, b \}^l$, then $z \in y$, that is,
 
  \begin{itemize}
\item[]  (y1) for all $z \in J$, if $z \leq h$ and $z \leq a$, then $z \leq y$ and
 
 (y2) for all $z \in J$, if $z \leq h$ and $z \leq b$, then $z \leq y$. 
 \end{itemize}
 
\noindent Now, using {\bf(H)}, (y1), (y2), (H1), and (H2), we get our goal, that is, $x \leq y$.  
\end{proof}

\subsection{B-distributivity}

The following definition seems to have appeared for the first time in \cite[Theorem 2.2. (i), p. 261]{B}. 

\begin{defi} \label{Bd}
A join-semilattice ${\bf J} = (J; \leq)$ is \emph{B-distributive} iff 

\begin{itemize}
\item[{\bf (B)}] for all $n, a_1, a_2, \dots, a_n, x \in J$, 
if $a_1 \wedge a_2 \wedge \cdots \wedge a_n$ exists, 
then also $(x \vee a_1) \wedge (x \vee a_2) \wedge \cdots \wedge (x \vee a_n)$ exists 
and equals $x \vee (a_1 \wedge a_2 \wedge \cdots \wedge a_n)$. 
\end{itemize}
\end{defi}

We have the following fact. 

\begin{prop} \label{HB} 
 Let ${\bf{J}} = (J; \leq)$ be a join-semilattice. 
 Then, H-distributivity implies B-distributivity.  
\end{prop}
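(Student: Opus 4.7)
Let $m = a_1 \wedge \cdots \wedge a_n$. My goal is to show that $(x \vee a_1) \wedge \cdots \wedge (x \vee a_n)$ exists and equals $x \vee m$; equivalently, $x \vee m$ is both a lower bound and the greatest lower bound of $\{x \vee a_1, \ldots, x \vee a_n\}$. Being a lower bound is trivial: since $m \leq a_i$, we have $x \vee m \leq x \vee a_i$ for each $i$. All the substance lies in showing that $x \vee m$ is the \emph{greatest} lower bound.

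So let $y$ be any lower bound, i.e., $y \leq x \vee a_i$ for every $i$. I plan to prove $y \leq x \vee m$ by a finite induction on $k \in \{0, 1, \ldots, n\}$ establishing the auxiliary statement $P(k)$: \emph{every $z$ with $z \leq y$ and $z \leq a_i$ for $i = k+1, \ldots, n$ satisfies $z \leq x \vee m$}. The base case $P(0)$ is immediate, since then $z$ is a common lower bound of all the $a_i$, so $z \leq m$ and hence $z \leq x \vee m$. The desired conclusion is $P(n)$ applied at $z = y$ (the constraints on $z$ being vacuous), which directly yields $y \leq x \vee m$.

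For the inductive step $P(k-1) \Rightarrow P(k)$, fix $z \leq y$ with $z \leq a_i$ for $i \geq k+1$. Since $z \leq y \leq x \vee a_k$, I would invoke H-distributivity in its equivalent form {\bf{(D$_{\vee}$)}} with $h = z$, $a = x$, $b = a_k$, and $c = x \vee m$. The inclusion $\{z, x\}^l \subseteq \{x \vee m\}^l$ is trivial, and $\{z, a_k\}^l \subseteq \{x \vee m\}^l$ follows from $P(k-1)$: any common lower bound $w$ of $z$ and $a_k$ satisfies $w \leq y$ and $w \leq a_i$ for all $i \geq k$, hence $w \leq x \vee m$ by $P(k-1)$. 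Then {\bf{(D$_{\vee}$)}} delivers $\{z, x \vee a_k\}^l \subseteq \{x \vee m\}^l$, and since $z$ itself lies in $\{z, x \vee a_k\}^l$ we conclude $z \leq x \vee m$.

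The main obstacle here is purely notational bookkeeping around the indices $k, k+1, \ldots, n$; I prefer to work with {\bf{(D$_{\vee}$)}} rather than the literal (H) of Hickman, so as to avoid separately verifying the existence of the auxiliary meets $z \wedge x$ and $z \wedge a_k$ at each step of the induction.
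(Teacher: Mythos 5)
Your proof is correct, and its engine is the same as the paper's: show that $x\vee m$ is a lower bound of the $x\vee a_i$ (trivial) and then use iterated applications of the {\bf(D$_\vee$)} form of H-distributivity to show it is the greatest one. The difference is in how the general $n$ is handled. The paper proves only the case $n=2$, applying {\bf(D$_\vee$)} twice in a nested fashion (first to pass from $\{a,x\}^l\cup\{a,b\}^l$ to $\{a,x\vee b\}^l$, then from $\{x\vee b,x\}^l\cup\{x\vee b,a\}^l$ to $\{x\vee b,x\vee a\}^l$), and dismisses the general case with ``follows by induction.'' That remark is not entirely innocent: a naive induction on $n$ would want to rewrite $a_1\wedge\cdots\wedge a_n$ as $a_1\wedge(a_2\wedge\cdots\wedge a_n)$, but in a join-semilattice the existence of the $n$-fold meet does not guarantee the existence of the sub-meets, so the reduction to the binary case needs care. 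Your finite induction on $P(k)$ sidesteps this entirely by keeping the full meet $m$ fixed throughout and peeling off one $x\vee a_k$ per step, each step being a single clean application of {\bf(D$_\vee$)} with $h=z$, $a=x$, $b=a_k$, $c=x\vee m$; I checked the inductive step and the bookkeeping of indices is right. So your argument is not just a rephrasing: it is a more complete proof of the stated proposition than the one the paper actually writes down.
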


\begin{proof}
 Let us have a H-distributive join semilattice $J$ and let us take $a, b, x \in J$ 
 (the general case follows by induction). 
 Let us suppose that $a \wedge b$ exists in $J$. 
 Then, also $x \vee (a \wedge b)$ exists in $J$. 
 Our goal is to see that $x \vee (a \wedge b) =$ inf $\{x \vee a, x \vee b \}$. 
 It is clear that $x \vee (a \wedge b) \leq x \vee a, x \vee b$. 
 Now, suppose both (F1) $y \leq x \vee b$ and (F2) $y \leq x \vee a$. 
 We have to see that $y \leq x \vee (a \wedge b)$. 
 It immediately follows that  
 
 \begin{itemize}
\item[]  (x1) for all $w \in J$, if $w \leq x \vee b$ and $w \leq x$, then $w \leq x \vee (a \wedge b)$.
 \end{itemize}
\noindent  Now, suppose (F3) $w \leq x \vee b$ and (F4) $w \leq a$. 
 Then, we have both 
 
 \begin{itemize}
\item[]  (x1') for all $y \in J$, if $y \leq a$ and $y \leq x$, then $y \leq x \vee (a \wedge b)$, and 
 
 (x2') for all $y \in J$, if $y \leq a$ and $y \leq b$, then $y \leq x \vee (a \wedge b)$.
 \end{itemize}
 \noindent So, applying H-distributivity to (F3), (F4), (z1'), and (x2'), we have $w \leq x \vee (a \wedge b)$. 
 That is, we have proved 
 
 \begin{itemize}
\item[]  (x2) for all $w \in J$, if $w \leq x \vee b$ and $w \leq a$, then $w \leq x \vee (a \wedge b)$. 
  \end{itemize}
 \noindent Using H-distributivity, (F1), (F2), (x1) and (x2), 
 it finally follows that $y \leq x \vee (a \wedge b)$, as desired. 
\end{proof}

The reciprocal of Proposition \ref{HB} does not hold as may be seen in Figure \ref{BH}. 

\begin{figure} [ht]
\begin{center}

\begin{tikzpicture}

    \tikzstyle{every node}=[draw, circle, fill=black, minimum size=4pt, inner sep=0pt, label distance=1mm]

    \draw (0,0)			node (a)[label=left: $a$]	{}
        -- ++(45:1.4142cm)	node (1)[label=right:$1$]	{}
        -- ++(270:1cm)		node (b)[label=right:$b$]	{}
        -- ++(90:1cm)
        -- ++(315:1.4142cm)	node (d)[label=right: $c$]	{};
            
\end{tikzpicture}

\end{center}
\caption{\label{BH} Join-semilattice showing that B- does not imply H-distributivity}
\end{figure}
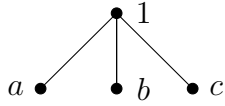

Observe also that the lattice $Id_{fp}(J)$, for $J$ being the join-semilattice of  Figure \ref{BH}, is not distributive since it is a diamond.

\subsection{S$_n$-distributivity}

The following definition seems to have appeared for the first time in \cite{S}. 

\begin{defi} \label{Sd}
A join-semilattice $(J; \leq)$ is said to be \emph{S$_n$-distributive} for $n$ a natural number, $2 \leq n$, iff 

\begin{itemize}
\item[{\bf (S$_n$)}] for all $a_1, a_2, \dots, a_n, x \in J$, 
if $a_1 \wedge a_2 \wedge \cdots \wedge a_n$ exists, 
then also $(x \vee a_1) \wedge (x \vee a_2) \wedge \cdots \wedge (x \vee a_n)$ exists 
and equals $x \vee (a_1 \wedge a_2 \wedge \cdots \wedge a_n)$. 
\end{itemize}
\end{defi}

It is easy to see that B-distributivity implies $S_n$-distributivity, for any $n \geq 2$. 
It is also clear that for any $n \geq 2$, $S_{n+1}$ implies $S_n$. 
On the other hand, we have that for no natural $n \geq 2$ it holds that $S_n$-distributivity implies B-distributivity. 
In fact, it was proved that for any $n \geq 2$, $S_n$ does not imply $S_{n+1}$ (see \cite{Ke}), 
where infinite models using the real numbers are provided. 
As in the case of GS- and H-distributivity, it is natural to ask whether, for example, finite models are possible. 
As in the cases just mentioned, the answer is negative as already proved in \cite[Theorem 7.1, p. 1071]{SCLS}.   
In \cite[Theorem, p. 26]{SA} it is also proved that it is not possible to find infinite wellfounded models. 

Therefore, so far we have seen that, in the case of a join-semilattice, we have the following chain of implications: 

\smallskip

\begin{center}
{\bf (GS)} $\Rightarrow$ {\bf (K)} $\Rightarrow$ {\bf (H)} $\Leftrightarrow$ {\bf (LR)} $\Leftrightarrow$ {\bf (ND)} $\Rightarrow$ {\bf (B)} $\Rightarrow \cdots$ {\bf (S$_n$)} 
$\Rightarrow$ {\bf (S$_{n-1}$)} $\Rightarrow \cdots$ {\bf (S$_2$)}.  
\end{center}

%\smallskip
%
%\noindent We have also seen that, in the case of a lattice, distributivity implies {\bf (GS)}.  
%In order to see that, in the context of a lattice, all the notions of distributivity considered in this section 
%are equivalent to the usual notion of distributivity for lattices, 
%it is enough to see that {\bf (S$_2$)} implies distributivity in the case of a lattice, which is immediate. 

\section{Join-semilattices with arrow}

The expansion of semilattices with an arrow operation has been well studied in the literature in the case of meet-semilattices 
under the name of relatively pseudo-complemented semilattices (see, for example, \cite{Gr}). 
However, as far as we know, the expansion of join-semilattices with an arrow has not received much attention, see, for instance, \cite{Chajda, Chajda2}. 
In this section we deal with distributivity of join-semilattices expanded with an arrow operation. 

A join-semilattice with arrow is a structure $(J; \leq, \to)$ where $(J; \leq)$ is a join-semilattice and 
the arrow $\to$ is a binary operation such that for all $a, b \in J$: 

\smallskip

$a \to b = \max \{c \in J:$ for all $x \in J$, if $x \leq a$ and $x \leq c$, then $x \leq b \}$.  

\smallskip

\noindent The existence of the $\to$ operation is clearly equivalent to the requirement that $\to$ satisfies the following two conditions: 

\smallskip

($\to$E) for all $x \in J$, if $x \leq a$ and $x \leq a \to b $, then $x \leq b$, 

\smallskip

($\to$I) for all $c \in J$, IF for all $x \in J$, if $x \leq a$ and $x \leq c$, then $x \leq b$, THEN $c \leq a \to b$.  

\begin{rem}
 The idea of defining arrow in a poset was already present in \cite{Ha} (see Definition 4, 
 where the author uses the terminology of Brouwer poset and also proves that a poset with arrow is LR-distributive). 
 Moreover, the author, using LR-notation, defines $a \to b =$ max $\{c \in J: \{a,c\}^l \subseteq \{b\}^l\}$. 
\end{rem}

\begin{rem}
 In a lattice, or even in a meet-semilattice, arrow coincides with the usual relative meet-complement. 
 This follows from the fact that, as previously mentioned, 
 the inequality $a \wedge x \leq b$ is equivalent to the following universal quantification: 
 for all $y$, if $y \leq a$ and $y \leq x$, then $y \leq b$. 
 By the way, we prefer to use ``arrow'' instead of ``relative meet-complement'', because the meet is not present. 
\end{rem}

As is well known, a lattice with a relative meet-complement (that is in fact a Heyting algebra) is distributive (see \cite{S1} or \cite{S2}). 
The natural question arises whether a join-semilattice with arrow is distributive 
in any of the senses considered in Section \ref{SDN}. 
The answer is negative in the case of (GS)-distributivity, 
as the join-semilattice in Figure \ref{jsl1} has arrow and is not GS-distributive.

\begin{figure} [ht]
\begin{center}

\begin{tikzpicture}

    \tikzstyle{every node}=[draw, circle, fill=white, minimum size=4pt, inner sep=0pt, label distance=1mm]
     
     \draw (0,0) node (a)         [label=left: $a$] {}

        -- ++(45:1cm)  node (1) [label=above: $1$] {}
        -- ++(315:1cm) node (b) [label=right:$b$] {};
   
\end{tikzpicture} \hskip 2cm
\begin{tabular}{c||c|c|c}

 $\to$&a&b&1 \\
 \hline \hline
 a&1&b&1 \\
 \hline 
 b&a&1&1 \\
 \hline 
 1&a&b&1 \\
\end{tabular}

\end{center}
\caption{\label{jsl1} A join-semilattice with arrow}
\end{figure}
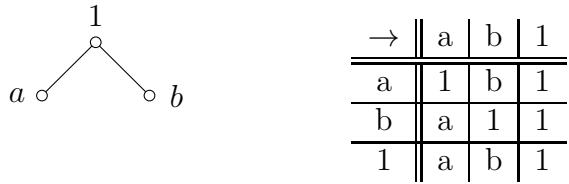

A similar question in the case of K-distributivity has also a negative answer, 
as the the join-semilattice in Figure \ref{jsl2}, already given in Figure \ref{HK}, 
has arrow and is not K-distributive. 

\begin{figure} [ht]
\begin{center}

\begin{tikzpicture}

    \tikzstyle{every node}=[draw, circle, fill=black, minimum size=4pt, inner sep=0pt, label distance=1mm]
        
        \draw			(0,0)		node (x1)	[label=left: $x_1$]	{};
        \draw			(x1)--(3,1)	node (y1)	[label=right: $y_1$]	{};
        \draw			(y1)--(3,2)	node (y2)	[label=right: $y_2$]	{};
        \draw			(y2)--(3,3)	node (y3)	[label=right: $y_3$]	{};
        \draw [dotted, thick]	(y3)--(3,4); 	
        \draw 			(3, 5) 		node (c) 	[label=right: $c$]	{};
        \draw			(c)--(4,6)	node (a)	[label=right: $a$]	{};
        \draw			(a)--(3,7)	node (top)	[label=right: $1$]	{};
        
        \draw 			(top)--(0,6)	node (e)	[label=above: $e$]	{};
        \draw 			(e)--(1,5)	node (d)	[label=right: $d$]	{};
        \draw 			(d)--(a);
        \draw 			(c)--(2,6)	node (b)	[label=right: $b$]	{};
        \draw 			(b)--(top);
        \draw 			(e)--(-1,5)	node (f)	[label=left: $f$]	{};
        \draw 			(b)--(f);
        \draw 			(f)--(0,4)	node (df)	[fill=white]		{};
        \draw 			(c)--(df);
        \draw 			(d)--(df);
        
        \draw [dotted, thick]	(0,3)--(0,2)	node (x3) 	[label=left: $x_3$] 	{};
        \draw			(x3)--(0,1)	node (x2)	[label=left: $x_2$]	{};
        \draw			(x2)--(x1);
        \draw			(x2)--(y2);
        \draw			(x3)--(y3);
 
\end{tikzpicture}

\begin{tabular}{c||c|c|c|c|c|c|c|c|c|c|c|c|c}

 $\to$&$x_1$&$x_2$&$x_n$&$y_1$&$y_2$&$y_n$&f&d&e&c&b&a&1 \\
 \hline \hline
 $x_1$&1&1&1&1&1&1&1&1&1&1&1&1&1 \\
 \hline 
 $x_2$&$y_1$&1&1&$y_1$&1&1&1&1&1&1&1&1&1 \\
 \hline 
 $x_n$&$y_1$&$y_2$&1&$y_1$&$y_2$&1&1&1&1&1&1&1&1 \\
 \hline 
 $y_1$&e&e&e&1&1&1&e&e&e&1&1&1&1 \\
 \hline 
 $y_2$&e&e&e&$y_1$&1&1&e&e&e&1&1&1&1 \\
 \hline 
 $y_n$&$x_1$&$x_2$&e&$y_1$&$y_2$&1&1&1&1&1&1&1&1 \\
 \hline 
 f&$y_1$&$y_2$&$y_n$&$y_1$&$y_2$&$y_n$&1&a&1&c&1&a&1 \\
 \hline 
 d&$y_1$&$y_2$&$y_n$&$y_1$&$y_2$&$y_n$&b&1&1&b&b&1&1 \\
 \hline 
 e&$y_1$&$y_2$&$y_n$&$y_1$&$y_2$&$y_n$&b&a&1&c&b&a&1 \\
 \hline 
 c&$x_1$&$x_2$&$x_n$&$y_1$&$y_2$&$y_n$&e&e&e&1&1&1&1 \\
 \hline 
 b&$x_1$&$x_2$&$x_n$&$y_1$&$y_2$&$y_n$&e&d&e&a&1&a&1 \\
 \hline 
 a&$x_1$&$x_2$&$x_n$&$y_1$&$y_2$&$y_n$&f&e&e&b&b&1&1 \\
 \hline 
 1&$x_1$&$x_2$&$x_n$&$y_1$&$y_2$&$y_n$&f&d&e&c&b&a&1 \\
\end{tabular}

\end{center}
\caption{\label{jsl2} Another join-semilattice with arrow}
\end{figure}
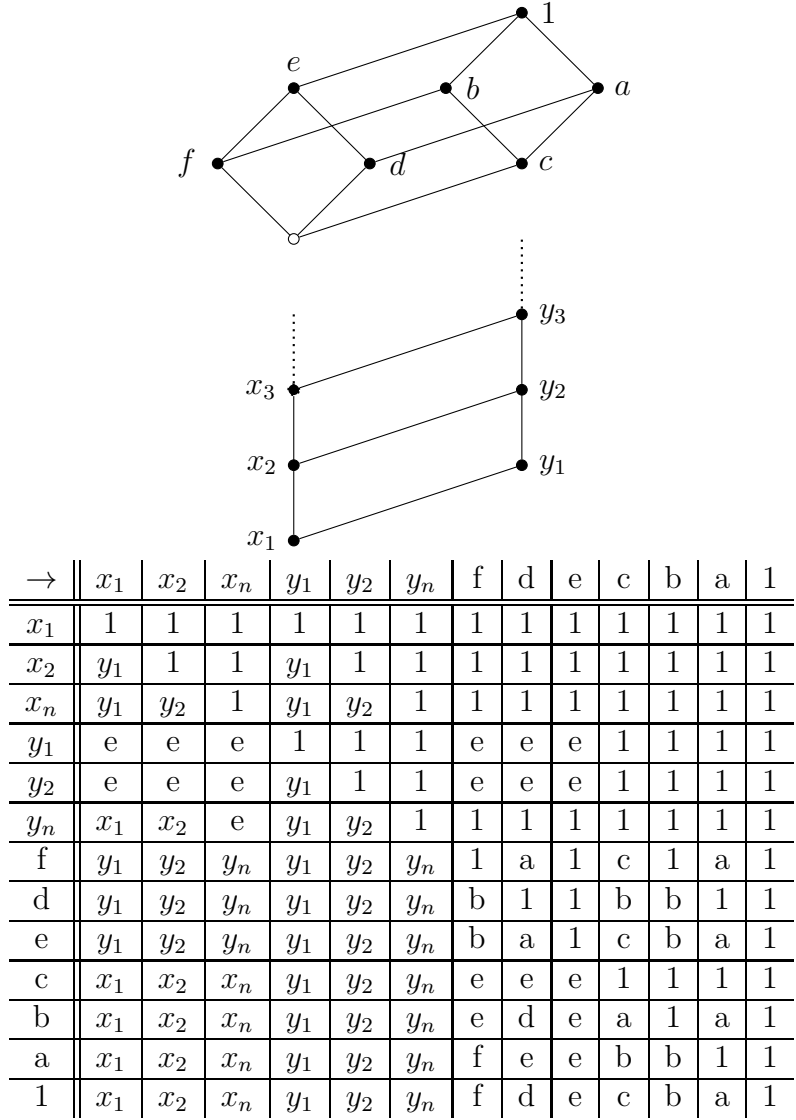

The case of H-distributivity is different, as we see next.  

\begin{prop}
 Every join-semilattice expanded with arrow is H-distributive.
\end{prop}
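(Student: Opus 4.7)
The plan is to verify condition {\bf (D$_\vee$)} directly, which by Proposition \ref{Hd} is equivalent to H-distributivity. The arrow operation is tailor-made for this purpose: the rules ($\to$I) and ($\to$E) together let us internalize the universally quantified implications appearing in the hypothesis of {\bf (D$_\vee$)} as a single inequality involving $h \to c$, and then unpack it again at the decisive moment.

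Fix $h, a, b, c \in J$ and suppose $\{h,a\}^l \cup \{h,b\}^l \subseteq \{c\}^l$. The first step is to convert each of the two inclusion hypotheses into an inequality. The inclusion $\{h,a\}^l \subseteq \{c\}^l$ reads: for all $y \in J$, if $y \leq h$ and $y \leq a$, then $y \leq c$. Applying ($\to$I) with $h \to c$ as the target arrow and $a$ as the candidate element bounded by it, we obtain $a \leq h \to c$; the symmetric argument gives $b \leq h \to c$. Since $h \to c$ is a common upper bound of $a$ and $b$, it follows that $a \vee b \leq h \to c$.

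For the second step, take any $x \in \{h, a \vee b\}^l$. Then $x \leq h$ and, by transitivity with the bound just established, $x \leq a \vee b \leq h \to c$. Applying ($\to$E) to the two inequalities $x \leq h$ and $x \leq h \to c$ immediately yields $x \leq c$. Hence $\{h, a \vee b\}^l \subseteq \{c\}^l$, which is precisely {\bf (D$_\vee$)}.

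There is no substantive obstacle here: the whole argument is essentially a two-line Galois-style computation, once one picks the correct reading of ($\to$I). The only point requiring care is the bookkeeping of which variable plays which role in the rule, since a careless substitution would produce the useless inequality $h \leq a \to c$ instead of the decisive $a \leq h \to c$ used above. Conceptually, this also explains why the arrow examples in Figures \ref{jsl1} and \ref{jsl2}, which refute GS- and K-distributivity, are nevertheless forced to be H-distributive.
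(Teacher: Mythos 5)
Your proof is correct and follows essentially the same route as the paper's: both use ($\to$I) to turn the two hypotheses into $a \leq h \to c$ and $b \leq h \to c$, conclude $a \vee b \leq h \to c$, and then discharge with ($\to$E). The only cosmetic difference is that you phrase the goal as the set-theoretic form of {\bf (D$_\vee$)} while the paper works with the quantified form directly.
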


\begin{proof}
 Let ${\bf J} = (J; \leq)$ be a join-semilattice with arrow.  
 Take $a, b, c, h \in J$. 
 Suppose
 \begin{itemize}
\item[] (x1) for all $x \in J$, if $x \leq h$ and $h \leq a$, then $h \leq c$ and 
 
 (x2) for all $x \in J$, if $x \leq h$ and $x \leq b$, then $x \leq c$. 
  \end{itemize}

\noindent Take $y \in J$ and suppose 
  \begin{itemize}
\item[]
 (F1) $y \leq h$ and 
 
 (F2) $y \leq a \vee b$.  
 \end{itemize}
\noindent Now, using ($\to$I), (x1) implies $a \leq h \to c$ and (x2) implies $b \leq h \to c$.  
These inequalities together with (F2) imply $y \leq h \to c$, 
which, using (F1) and ($\to$E), gives $y \leq c$. 
\end{proof}

Analogously to what happens when considering lattices, in the finite case we have the following fact.

\begin{prop} \label{fHd}
 Every finite H-distributive join-semilattice has arrow.
\end{prop}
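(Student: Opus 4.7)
The plan is to construct $a \to b$ explicitly as the join of all elements in the set
$$S_{a,b} = \{c \in J : \text{for all } x \in J, \text{ if } x \leq a \text{ and } x \leq c, \text{ then } x \leq b\},$$
and then verify that this join witnesses the arrow. To make this work I need three ingredients: that $S_{a,b}$ is nonempty, that it is closed under the semilattice operation $\vee$, and that finiteness of $J$ lets me take a ``biggest'' element of $S_{a,b}$.

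First I would observe that $b \in S_{a,b}$ trivially (if $x \leq a$ and $x \leq b$, then $x \leq b$), so $S_{a,b}$ is nonempty. Next, I would show closure under join: if $c_1, c_2 \in S_{a,b}$, then by definition we have both ``for all $x$, if $x \leq a$ and $x \leq c_1$, then $x \leq b$'' and ``for all $x$, if $x \leq a$ and $x \leq c_2$, then $x \leq b$.'' These are precisely the two hypotheses of {\bf (D$_\vee$)} with $h=a$ and $c=b$, so H-distributivity (which is equivalent to {\bf (D$_\vee$)} by Proposition~\ref{Hd}) yields ``for all $x$, if $x \leq a$ and $x \leq c_1 \vee c_2$, then $x \leq b$,'' i.e., $c_1 \vee c_2 \in S_{a,b}$. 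This is the step where H-distributivity is essential; everything else is bookkeeping.

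Since $J$ is finite, $S_{a,b}$ is a nonempty finite subset closed under binary joins, so $m := \bigvee S_{a,b}$ is itself an element of $S_{a,b}$ and is clearly its maximum. I would then check the two defining conditions of the arrow against $m$: ($\to$E) holds because $m \in S_{a,b}$ means exactly that $x \leq a$ and $x \leq m$ imply $x \leq b$; and ($\to$I) holds because any $c$ satisfying the premise of ($\to$I) belongs to $S_{a,b}$ and is therefore dominated by $m$. Hence $a \to b = m$ exists in $J$.

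The only real obstacle is the closure step, and it is handled in one line by {\bf (D$_\vee$)}; finiteness enters only to guarantee that a join over the whole (finite) set $S_{a,b}$ exists inside $J$. An analogous argument in the infinite setting would require suprema of arbitrary subsets, which of course need not exist in a general join-semilattice, explaining why the hypothesis of finiteness is used here rather than in Proposition~\ref{Hd}.
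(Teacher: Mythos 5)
Your proof is correct and follows essentially the same route as the paper: both construct $a \to b$ as $\bigvee S_{a,b}$, use H-distributivity to show this join still lies in $S_{a,b}$ (the paper applies the $n$-ary form directly, you iterate the binary form, which is equivalent by the earlier lemmas), and use finiteness only to guarantee the join exists. Your explicit remarks that $b \in S_{a,b}$ and that the set is join-closed are minor tidy additions, not a different argument.
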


\begin{proof}
 Let ${\bf J} = (J; \leq)$ be a finite H-distributive join-semilattice.  
 Due to finiteness, 
 $c_1 \vee c_2 \vee \cdots \vee c_n = \bigvee \{c \in J:$ 
 for all $x \in J$, if $x \leq a$ and $x \leq c$, then $x \leq b \}$ exists, for any $a, b\in J$. 
 It is clear that for any $c_i, 1 \leq i \leq n$, it holds that 
  \begin{itemize}
\item[]
 (F) for all $x$, if $x \leq a$ and $x \leq c_i$, then $x \leq b$.
 \end{itemize}
\noindent Now, let us see that $c_1 \vee c_2 \vee \cdots \vee c_n$ is in fact $a \to b$. 

First, let us see that $c_1 \vee c_2 \vee \cdots \vee c_n \in \{c \in J:$ 
for all $x \in J$, if $x \leq a$ and $x \leq c$, then $x \leq b \}$. 
That is, we have to see that 
 \begin{itemize}
\item[]
 (T) for all $x \in J$, if $x \leq a$ and $x \leq c_1 \vee c_2 \vee c_n$, then $x \leq b$.
\end{itemize}
\noindent Now, (T) clearly follows from (F) by H-distributivity. 

Secondly, let us take $c \in J$ such that for all $x \in J$, if $x \leq a$ and $x \leq c$, then $x \leq b$. 
Then, obviously, $c \in \{c \in J:$ for all $x \in J$, if $x \leq a$ and $x \leq c$, then $x \leq b \}$. 
Then, $c \leq c_1 \vee c_2 \cdots \vee c_n$, 
as $c_1 \vee c_2 \vee \cdots \vee c_n = \bigvee \{c \in J:$ for all $x \in J$, if $x \leq a$ and $x \leq c$, then $x \leq b \}$.  
\end{proof}

%\begin{rem}
% Note that Proposition \ref{fHd} does not hold in the case of posets considering the poset given in Figure \ref{fHdwa}, 
% observing that $a \to a$ does not exist, and 
% proving that it is LR-distributive as 
% $(\{a\} \cup \{a,b\}^u)^l = (\{a\}^l \cup \{a,b\}^l)^{ul}$, 
% $(\{b\} \cup \{a\}^u)^l = (\{ b,a \}^l \cup \{b,a\}^l)^{ul}$, and 
% $(\{a\} \cup \{a \}^u)^l = (\{a\}^l \cup \{a \}^l)^{ul}$ hold.  
%\end{rem}
%
%
%\begin{figure} [ht]
%\begin{center}
%
%\begin{tikzpicture}
%
%    \tikzstyle{every node}=[draw, circle, fill=white, minimum size=4pt, inner sep=0pt, label distance=1mm]
%     
%    \draw (0,0) node (a)	[label=left: $a$] {};
%    \draw (1,0) node (b)	[label=right:$b$] {};
%    
%\end{tikzpicture}
%
%\end{center}
%\caption{\label{fHdwa} A finite H-distributive poset without arrow}
%\end{figure}

Finally, the natural question arises whether the class of join-semilattices expanded with arrow 
forms a variety or at least a quasi variety. 
The following example proves that the answer is negative. 
Indeed, consider the distributive lattice in Figure \ref{B9}, 
which is the direct product ${\bf J} = (L \times L; \leq)$ where  $L = \{0, \tfrac{1}{2}, 1\}$. 
It is clear that we can define in ${\bf J}$ an arrow $\to$, in fact, ${\bf J}^* = (L \times L; \leq, \to)$ becomes a Heyting algebra. 
Now, consider ${\bf J}^*$ as a join-semilattice with arrow, 
and observe that the set $B$ of elements represented by black nodes in the figure is the domain of a subalgebra $(B; \leq, \to)$ of  ${\bf J}^*$, 
since both $\lor$ and $\to$ are closed on $B$. 
However, the join-semilattice $(B, \leq)$ is not distributive (it contains a pentagon), 
and moreover the arrow operation is not defined for all pairs of elements. 
In particular, $(\tfrac{1}{2}, \tfrac{1}{2}) \Rightarrow (0, 0)$ is not defined since the set 
$$\{ (c, d) \in B : \forall (x,y) \in B, \mbox{if } (x, y) \leq (c, d) \mbox{ and }(x, y)  \leq (\tfrac{1}{2}, \tfrac{1}{2}) \mbox{, then } (x, y) \leq (0, 0) \}$$ 
has no maximum.

%\begin{figure} [ht]
%\begin{center}
%
%\begin{tikzpicture}
%
%    \tikzstyle{every node}=[draw, circle, fill=black, minimum size=4pt, inner sep=0pt, label distance=0.5mm]
%
%    % First, draw and connect some nodes
%
%    \draw (0,0) node (00) {};
%    \draw (-1,1) node (0m) [fill=white]{};
%    \draw (-2,2) node (01) []{};
%    \draw (1,1) node (m0) [fill=white]{};
%    \draw (2,2) node (10) []{};
%    \draw (0,2) node (mm) []{};
%    \draw (-1,3) node (m1) []{};
%    \draw (1,3) node (1m) []{};
%    \draw (0,4) node (11) []{};
%    \draw (00)--(0m)--(01)--(11)--(1m)--(10)--(m0)--(00)
%    	  (0m)--(mm)--(1m) 
%    	  (m0)--(mm)--(m1);
% 
%\end{tikzpicture}
%
%\end{center}
%\caption{\label{B8} The class of join-semilattices with arrow is not a quasi-variety}
%\end{figure}

\begin{figure} [ht]
\begin{center}

\begin{tikzpicture}

    \tikzstyle{every node}=[draw, circle, fill=black, minimum size=4pt, inner sep=0pt, label distance=0.5mm]

    \draw (0,0)	node (00) [label=right:{(0,0)}] {};
    \draw (-1,1)node (0m) [label=left:{(0,$\tfrac{1}{2}$)}] [fill=white] {};
    \draw (-2,2)node (01) [label=left:{(0,1)}] []{};
    \draw (1,1) node (m0) [label=right:{($\tfrac{1}{2}$,0)}] [fill=white] {};
    \draw (2,2)	node (10) [label=right:{(1,0)}] {};
    \draw (0,2)	node (mm) [label=right:{($\tfrac{1}{2}$,$\tfrac{1}{2}$)}] {};
    \draw (-1,3)node (m1) [label=left:{($\tfrac{1}{2}$,1)}]{};
    \draw (1,3)	node (1m) [label=right:{(1,$\tfrac{1}{2}$})]{};
    \draw (0,4)	node (11) [label=right:{(1,1)}]{};
    
    \draw (00)--(0m)--(01)--(11)--(1m)--(10)--(m0)--(00)
    	  (0m)--(mm)--(1m) 
    	  (m0)--(mm)--(m1);
 
\end{tikzpicture}

\end{center}
\caption{\label{B9} A distributive join-semilattice with definable arrow. }
\end{figure}
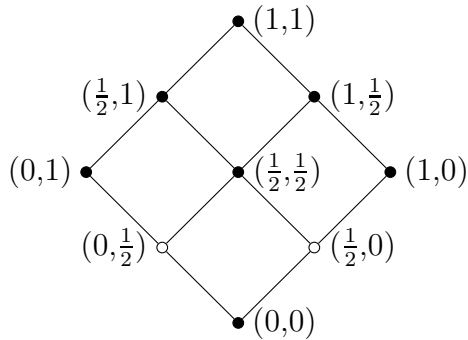

% \section*{Questions}

% Find a distributive (in the sense of Gr\"atzer) meet-semilattice that is not a lattice. 

% \noindent Is every finite distributive (in the sense of Gr\"atzer) meet-semilattice a lattice?

\section{Conclusions}

In this paper we have proposed a notion of distributivity for join-semilattices with logical motivations 
related to Gentzen's disjunction elimination rule in the $\{\lor, \to\}$-fragment of intuitionistic logic, 
and we have compared it to other notions of distributivity for join-semilattices proposed in the literature. 

There are a number of open problems that we plan to address as future research. 
In particular we can mention the following ones: 

\begin{itemize}

\item As for the logical motivation, similar to the $\bf (\lor E)$ rule in Section 3, 
one can consider the following rule with two contexts: 

\begin{prooftree}
 \AxiomC{$\mathfrak H_1$, $\mathfrak H_2$, $\mathfrak{A \vee B}$}
 \AxiomC{$\mathfrak H_1$, $\mathfrak H_2$, [$\mathfrak A$]}
 \noLine
 \UnaryInfC{$\mathfrak C$}
 \AxiomC{$\mathfrak H_1$, $\mathfrak H_2$, [$\mathfrak B$]}
 \noLine
 \UnaryInfC{$\mathfrak C$}
 \TrinaryInfC{$\mathfrak C$}
\end{prooftree}

\noindent This rule also has a natural algebraic translation in the case of join-semilattices. 
The question arises whether it is equivalent to the condition {\bf{(D$_{\vee}$)}} or if it leads to a different one. 

\item Distributive lattices are charecterized by their lattice of ideals. 
In the case of join-semillatices, there are similar characterizations for GS-, K- and H-distributivity, 
but not for B- and S$_n$distributivity. 
The question is whether B- and S$_n$-distributive join-semilattices can be characterized by means of their ideals.

\item In \cite{Chajda3} the authors generalize the well-known characterisation of distributive lattices in terms of forbidden sublattices 
(diamond and pentagon) to distributive posets, also identifying the set of forbidden subposets. 
A similar study for distributive join-semilattices is an open question.

\end{itemize}

\subsection*{Acknowledgments} The authors acknowledge partial support by the H2020 MSCA-RISE-2015 project SYSMICS. Esteva and Godo also acknowledge the FEDER/MINECO project TIN2015-71799-C2-1-P.

\end{document}